\documentclass[11pt]{article}

\usepackage{amsmath}
\usepackage{amssymb}
\usepackage{amscd}
\usepackage{amsthm}
\usepackage{indentfirst}

\usepackage{tikz}
\usepackage{wrapfig}

\newcommand{\Z}{\ensuremath{\mathbb{Z}}}
\newcommand{\Q}{\ensuremath{\mathbb{Q}}}
\newcommand{\C}{\ensuremath{\mathbb{C}}}
\newcommand{\F}{\ensuremath{\mathbb{F}}}



\usepackage[margin=2.6cm]{geometry}




\theoremstyle{plain}
\newtheorem{thm}{Theorem}[section]
\newtheorem{lem}[thm]{Lemma}

\theoremstyle{definition}

\newtheorem{rmq}[thm]{Remark}
\newtheorem{exm}[thm]{Example}

\DeclareMathOperator{\Spec}{Spec}
\DeclareMathOperator{\Pic}{Pic}

\DeclareMathOperator{\Gal}{Gal}
\DeclareMathOperator{\Res}{Res}
\DeclareMathOperator{\NS}{NS}

\DeclareMathOperator{\rk}{rk}
\DeclareMathOperator{\car}{char}
\DeclareMathOperator{\GL}{GL}
\DeclareMathOperator{\SL}{SL}

\DeclareMathOperator{\Cl}{Cl}
\DeclareMathOperator{\Disc}{Disc}

\newcommand{\Gm}{\mathbf{G}_{{\rm m}}}
\newcommand{\E}{\mathcal{E}}
\newcommand{\Qbar}{\overline{\Q}}


\begin{document}

\title{Descent on elliptic surfaces and arithmetic bounds for the Mordell-Weil rank}

\author{Jean Gillibert \and Aaron Levin}

\date{June 2021}

\maketitle

\begin{abstract}
We introduce the use of $p$-descent techniques for elliptic surfaces over a perfect field of characteristic not $2$ or $3$. Under mild hypotheses, we obtain an upper bound for the rank of a non-constant elliptic surface. When $p=2$, this bound is an arithmetic refinement of a well-known geometric bound for the rank deduced from Igusa's inequality. This answers a question raised by Ulmer. We give some applications to rank bounds for elliptic surfaces over the rational numbers.
\end{abstract}


\section{Introduction}

The aim of this paper is to introduce the use of classical $p$-descent techniques for elliptic curves over function fields of one variable, and to use these techniques to derive general bounds for the Mordell-Weil rank which are sensitive to the field of constants of the function field.  Let $k$ be a field of characteristic not $2$ or $3$ and let $S$ be a smooth projective geometrically integral curve over $k$.  Let $E$ be a (nonconstant) elliptic curve over the function field $k(S)$.  Using cohomological arguments (see \S{}\ref{intro:Igusa}), one obtains the following well-known upper bound for the rank:
\begin{equation}
\label{eq:Igusa}
\rk_{\Z} E(k(S))\leq 4g(S)-4+\deg(\mathfrak{f}_E),
\end{equation}
where $g(S)$ is the genus of $S$ and $\deg(\mathfrak{f}_E)$ is the degree of the conductor of $E$, viewed as a divisor on $S$ (in fact, \eqref{eq:Igusa} also holds in characteristics $2$ and $3$).  Following \cite{silverman04}, we call \eqref{eq:Igusa} the \emph{geometric rank bound}.  This bound is geometric in the sense that the right-hand side does not depend on the field $k$, and in particular the inequality \eqref{eq:Igusa} holds with $k$ replaced by its algebraic closure.  In positive characteristic, there are examples with conductor of arbitrarily large degree for which the inequality \eqref{eq:Igusa} is sharp (see \cite{shioda86}, \cite{ulmer02}).

When $k$ is a number field, Ulmer \cite[Section 9]{ulmer04} has raised the question of the existence of \emph{arithmetic bounds} improving \eqref{eq:Igusa}, that is, refinements of the inequality \eqref{eq:Igusa} which depend on the arithmetic of the field $k$.  Ulmer noted that work of Silverman \cite{silverman00, silverman04} on ranks of elliptic curves over abelian towers provided some evidence for the existence of such a bound.  Under mild hypotheses, we give a positive answer to Ulmer's question, providing a bound for the rank which is equivalent to \eqref{eq:Igusa} when $k$ is algebraically closed (see \S{}\ref{intro:Igusa}), and which in general yields an improvement depending on the arithmetic of the field $k$.  The precise statement is given in Theorem \ref{thm_kummer}: for each suitable prime $p$, we obtain a bound which depends crucially on the number of $k$-rational $p$-torsion points in the Jacobian of a certain curve associated to the $p$-torsion subgroup of $E$.
Two examples (\ref{exm:tomasz}, \ref{exm2}), and one application (Theorem~\ref{thm:0bound3}), are given when $k=\Q$.

When $k=\F_q$ is a finite field, Brumer \cite{brumer92} used Weil's ``explicit formula" to prove an arithmetic bound for the rank of $E$ depending on $q$:
\begin{equation}
\label{eq:Brumer}
\rk_{\Z} E(k(S))\leq \frac{4g(S)-4+\deg(\mathfrak{f}_E)}{2\log_q \deg(\mathfrak{f}_E)}+ c \frac{\deg(\mathfrak{f}_E)}{(\log_q \deg(\mathfrak{f}_E))^2},
\end{equation}
where $c$ is an explicit constant depending only on $g(S)$ and $q$.  Although there are examples with conductor of arbitrarily large degree for which the main term in the inequality \eqref{eq:Brumer} is sharp (see \cite{ulmer02}), our bounds will frequently also provide an improvement to \eqref{eq:Brumer}.  Silverman \cite[Conjecture 4]{silverman04} has stated a conjectural analogue of Brumer's inequality  over number fields, and  Theorem~\ref{thm_kummer} provides a possible approach towards Silverman's conjecture (Remark \ref{rm:SilConjecture}).

It is quite surprising that, although $p$-descent techniques have been extensively used in order to bound the rank of elliptic curves over number fields, we have not been able to track a similar use of these techniques in the function field setting, apart from a few exceptions (see Remark~\ref{rmq:Selmer}).  

Instead, a more common approach takes advantage of the rich theory that can be developed in this setting by relating the geometry of surfaces and the Mordell-Weil group of an elliptic curve over a function field, enriched with the lattice structure on the torsion-free part induced by the N{\'e}ron-Tate height pairing.  The detailed analysis of these Mordell-Weil lattices was developed simultaneously by Elkies and Shioda in the late 80's, and quickly became a  powerful and central tool in the study of elliptic curves over function fields.
 
Despite the success of this approach, we believe that $p$-descent techniques can shed new light on classical results in the function field setting, and provide new ideas to tackle open questions. Additionally, these techniques may be advantageous for studying ``arithmetic'' problems, where one desires finer information over a non-algebraically closed field.  For instance, in recent work we give applications of our general $p$-descent result to the construction of number fields with ``large'' ideal class groups \cite{gl19}, and in a forthcoming paper, we give applications to the study of integral points on elliptic curves over function fields.


\subsection{$p$-descent: the generic case}
\label{intro:descent}

Let $k$ be a perfect field of characteristic not $2$ or $3$.  In the applications we have in mind, $k$ may be a number field, or a finite field, or the algebraic closure of such fields.

Let $S$ be a smooth projective geometrically integral curve over $k$, and let $k(S)$ be the function field of $S$. By abuse of notation, we identify closed points of the scheme $S$ and discrete valuations of $k(S)/k$. If $v$ is such a valuation, we denote by $k_v$ the residue field of $v$, which is a finite extension of $k$.
The scheme $S$ being a Dedekind scheme, if we start with an elliptic curve over $k(S)$ we may consider its N{\'e}ron model over $S$. In the present paper, the word \emph{N{\'e}ron model} refers to N{\'e}ron's group scheme model, which is the smooth locus of N\'eron's minimal regular model (see \cite[\S{}1.5]{NeronModels} or \cite[\S{}10.2]{liu}).

Throughout this paper, we consider the following setting:

\begin{enumerate}
\item $E$ is an elliptic curve over $k(S)$.
\item $\mathcal{E}\to S$ is the N\'eron model of $E$ over $S$.
\item $\Sigma\subset S$ is the set of places of bad reduction of $\E$.
\item $p\neq \car(k)$ is a prime number.
\item $\mathcal{E}[p]\to S$ is the group scheme of $p$-torsion points of $\E$. The map $\mathcal{E}[p]\to S$ is {\'e}tale, and its restriction to $S\setminus \Sigma$ is finite of degree $p^2$.
\end{enumerate}

The \'etaleness of $\mathcal{E}[p]\to S$ stated above follows from the fact that the prime $p$ is invertible on $S$ (assumption 4); see \cite[\S{}7.3, Lemma~2, (b)]{NeronModels} for a proof. Since $S$ is smooth and the composition of smooth morphisms is smooth, one deduces that $\E[p]\setminus \{0\}$  is smooth over $k$.

If $v\in \Sigma$ is a place of bad reduction of $\E$, we denote by $\E_v:=\E\times_S k_v$ the special fiber of $\E$ at $v$, by $\E_v^0$ the connected component of the identity in $\E_v$, and by $\Phi_v$ the component group of $\E_v$. By definition, we have an exact sequence for the {\'e}tale topology on $k_v$
\begin{equation}
\label{eq:compgroup}
\begin{CD}
0 @>>> \E_v^0 @>>> \E_v @>>> \Phi_v @>>> 0, \\
\end{CD}
\end{equation}
and $\Phi_v$ is a finite {\'e}tale group scheme over $k_v$. The \emph{Tamagawa number} of $\E$ at $v$ is by definition the order of $\Phi_v(k_v)$, and we denote it by $c_v$. Finally, we denote by $\Phi$ the skyscraper sheaf over $S$ whose fiber at $v$ is $\Phi_v$, and by $\E^{p\Phi}$ the inverse image of $p\Phi$ by the natural map $\E\to\Phi$.

The scheme $\E[p]\setminus \{0\}$ (where $\{0\}$ denotes the image of the unit section of $\mathcal{E}\to S$) is a smooth scheme of dimension one over $k$, with a degree $p^2-1$ quasi-finite map to $S$. The deficiency of finiteness of this map can be understood as follows: above a place $v$ of multiplicative reduction at which $\Phi_v$ has no $p$-torsion, the fiber of $\E[p]\to S$ has $p$ points instead of $p^2$. Above a place $v$ of additive reduction at which $\Phi_v$ has no $p$-torsion, the fiber of $\E[p]\to S$ has only one point. As a result, the scheme $\E[p]$ is not projective over $k$.

We shall denote by $C$ the smooth compactification of $\E[p]\setminus \{0\}$, endowed with its canonical finite map $C\to S$ of degree $p^2-1$. By construction, $C$ is a smooth projective curve over $k$, and the inclusion $\E[p]\setminus \{0\} \hookrightarrow C$ is an isomorphism above the open subset of good reduction of $E$.

By elementary Galois theory, the following conditions are equivalent:
\begin{enumerate}
\item[(i)] $C$ is a geometrically integral $k$-curve;
\item[(ii)] $(E[p]\otimes_k \overline{k})\setminus \{0\}$ is the spectrum of a field;
\item[(iii)] the action of $\Gal(\overline{k(S)}/\overline{k}(S))$ on $E[p]\setminus \{0\}$ is transitive.
\end{enumerate}

In the sequel, we shall assume that these conditions hold. This implies that $E$ is non-constant, but does not prevent it from being isotrivial. On the other hand, if $E$ is not isotrivial, then it was proved by Igusa \cite{Igusa59} that, for all but finitely many $p$, the image of the Galois representation $\Gal(\overline{k(S)}/\overline{k}(S))\to \GL_2(\F_p)$ attached to $E[p]$ is $\SL_2(\F_p)$, which implies condition (iii).

We are ready to state the main result of this paper, which provides an arithmetic upper bound on the rank of elliptic curves over function fields.
The proof, which is given in \S{}\ref{sub:descent}, relies on $p$-descent techniques, analogous to the number field case.

\begin{thm}
\label{thm_kummer}
Assume that the action of $\Gal(\overline{k(S)}/\overline{k}(S))$ on $E[p]\setminus \{0\}$ is transitive, and let $C$ be the smooth compactification of $\E[p]\setminus \{0\}$. Then:
\begin{enumerate}
\item[1)] If $p\geq 3$, there is an injective morphism
\begin{equation}
\label{eq:pdescent_map}
\E^{p\Phi}(S)/p\E(S) \longrightarrow  \ker\left(N_{C/C^+}:\Pic(C)[p]\to\Pic(C^+)[p]\right),
\end{equation}
where $C^+$ is the quotient of $C$ by the involution $P\mapsto -P$, and  $N_{C/C^+}$ denotes the norm map. It follows that
\begin{equation}
\label{inequality1}
\rk_{\Z} E(k(S)) \leq \dim_{\F_p} \Pic(C)[p] - \dim_{\F_p} \Pic(C^+)[p] + \#\{v\in \Sigma, p\mid c_v\}.
\end{equation}
where $c_v$ denotes the Tamagawa number of $\E$ at $v$.
\item[2)] If $p=2$, there is an injective morphism
\begin{equation}
\label{eq:2descent_map}
\E^{2\Phi}(S)/2\E(S) \longrightarrow  \ker\left(N_{C/S}:\Pic(C)[2]\to\Pic(S)[2]\right).
\end{equation}
It follows that
\begin{equation}
\label{inequality2}
\begin{split}
\rk_{\Z} E(k(S)) \leq &\dim_{\mathbb{F}_2} \Pic(C)[2] - \dim_{\mathbb{F}_2} \Pic(S)[2] + \#\{v\in \Sigma, 2\mid c_v\} \\
& +\#\{v\in \Sigma, \text{the red. type of $\E$ at $v$ is $\mathrm{I}_{2n}^*$ for some $n\geq 0$} \}.
\end{split}
\end{equation}
\end{enumerate}
\end{thm}

A few comments on the statement of Theorem~\ref{thm_kummer}:
\begin{enumerate}
\item The group $\E^{p\Phi}(S)$ is the group of sections of $\E$ which, in each fiber, reduce to a component which is a multiple of $p$ in the group of components. The quotient $\E^{p\Phi}(S)/p\E(S)$ measures the difference between sections which are locally multiples of $p$ and those which are globally multiples of $p$ (where locally should be understood in the sense of {\'e}tale topology). The injective morphism \eqref{eq:pdescent_map} is, roughly speaking, a coboundary map for the Kummer exact sequence induced by multiplication by $p$ on $\E$.
\item Let $J_C$ be the Jacobian of $C$. Then $\Pic(C)[p]$ is a subgroup of $J_C(k)[p]$, the group of $k$-rational $p$-torsion points on $J_C$ (equality holds, for example, if $C(k)\neq\emptyset$). Thus, $\Pic(C)[p]$ is a quantity which depends on the arithmetic of $C$ over $k$.
\item Observe that the norm map $N_{C/C^+}:\Pic(C)[p]\to\Pic(C^+)[p]$ is surjective when $p\geq 3$, because the degree of $C\to C^+$ is $2$, which is coprime to $p$. It follows that the quantity $\dim_{\mathbb{F}_p} \Pic(C)[p] - \dim_{\mathbb{F}_p} \Pic(C^+)[p]$ cannot decrease when enlarging the field $k$. The same remark applies when $p=2$, in which case $C\to S$ has degree $3$, and hence the norm map $N_{C/S}:\Pic(C)[2]\to\Pic(S)[2]$ is surjective.
\item In the bound \eqref{inequality1}, the terms $\dim_{\mathbb{F}_p} \Pic(C)[p] - \dim_{\mathbb{F}_p} \Pic(C^+)[p]$ and $\#\{v\in \Sigma, p\mid c_v\}$ are both of arithmetic nature. As we have seen, the first one depends on the size of the $k$-rational $p$-torsion subgroups of the Jacobians of the curves $C$ and $C^+$.
The second one is the number of closed points of $S$ at which $\E$ has bad reduction and Tamagawa number divisible by $p$. In fact, it follows from Lemma~\ref{cm} that
$$
\#\{v\in \Sigma, p\mid c_v\}=\dim_{\F_p} H^0(S,\Phi[p]).
$$
When enlarging $k$, this number can increase for two reasons: firstly, a closed point which is not $k$-rational can split as the sum of several points over a larger field, which has the effect of enlarging the set $\Sigma$; secondly, a component of order $p$ in the group $\Phi_v$ which is not $k_v$-rational can become rational over a larger field. The same remark applies, \emph{mutatis mutandis}, for $p=2$.

\item In the case when $p=2$, the bound \eqref{inequality2} is a geometric analogue of the classical bound of Brumer and Kramer \cite[Proposition~7.1]{BK77} for the rank of an elliptic curve over $\Q$ (which does not have a rational point of order $2$) in terms of the $2$-torsion of the class group of a cubic field and bad reduction data.
\item A comment on the terminology: when we say that $E$ has a fiber of type $\mathrm{I}_{2n}^*$ at $v$, we mean it over $k_v$, and not just over $\overline{k}$. More precisely, this means that the Kodaira type of $\E_v$ over $\overline{k}$ is $\mathrm{I}_{2n}^*$, and that the four components of $\E_v$ are rational over $k_v$, in other terms $\Phi_v(k_v)\simeq (\Z/2\Z)^2$. In general, the reduction type at $v$ can be described by the data of the reduction type over $\overline{k}$ together with the action of the absolute Galois group of $k_v$ on $\Phi_v$. See \cite[\S{}10.2]{liu}.
\item In practice, given a Weierstrass equation $y^2=x^3+Ax+B$ for $E$, with $A,B\in k(S)$, one can obtain explicit defining equations for $C$ and $C^+$ over $S$. More precisely, if $p=2$ then $C\to S$ is the degree $3$ cover defined by the equation $x^3+Ax+B=0$. If $p\geq 3$, then $C^+\to S$ is the degree $(p^2-1)/2$ cover defined by the vanishing of the $p$-division polynomial $\psi_p$ of $E$, and the curve $C$ is the double cover of $C^+$ defined by $y^2=x^3+Ax+B$, with $\psi_p(x)=0$.
\item In specific examples, computing the bound \eqref{inequality1} requires an efficient algorithm for computing the $p$-torsion in the Jacobian of a curve. Over finite fields, polynomial-time algorithms exist (see \cite{couveignes09}). Over number fields the situation is more complicated, but by reduction modulo a good prime $\neq p$ one obtains an upper bound on the $p$-torsion.
\end{enumerate}

\begin{rmq}
\label{rmq:pdescentimprovement}
When $p\geq 5$, one can improve \eqref{eq:pdescent_map} as follows: we have an injective morphism
$$
\E^{p\Phi}(S)/p\E(S) \longrightarrow \bigcap_{C\to C'} \ker\left(N_{C/C'}:\Pic(C)[p]\to\Pic(C')[p]\right)
$$
where $C\to C'$ runs through all proper subcovers of $C\to S$. See Remark~\ref{rmq:Dokchitser} for the details.
\end{rmq}

\begin{rmq}
It is possible to generalize the statement above by replacing $E[p]$ by a group $G\subset E$ of order $p>2$ satisfying conditions similar to (i)--(iii). By considering the isogeny $\lambda:E\to F$ with kernel $G$, one can prove an analogous result in which $\E^{p\Phi}(S)/p\E(S)$ is replaced by $\mathcal{F}^{\lambda\Phi}(S)/\lambda\E(S)$, with obvious notation. Note that $p=2$ is excluded here, because we need the sum of all elements of $G(\overline{k(S)})$ to be zero in order to make Lemma~\ref{w_injective} work.
\end{rmq}

\begin{rmq}
\label{rmq:Selmer}
The proof of Theorem~\ref{thm_kummer} relies on the computation of a geometric analogue of the $p$-Selmer group, under the assumption that $\Gal(\overline{k(S)}/\overline{k}(S))$ acts transitively on $E[p]\setminus \{0\}$.

There are a few occurrences in the literature of geometric analogues of Selmer groups. Let us cite in particular \cite{CSSD98}, whose Section~4 contains definitions and various properties of these groups; as we note in Remark~\ref{rmq:geometricSelmer}, the {\'e}tale cohomology groups that we compute are closely related to them. We also refer the reader to \cite{ellenberg06}, which is more focused on $p^\infty$-Selmer groups and fundamental groups.
\end{rmq}

\begin{rmq}
\label{rmq:nongeneric}
When the full $p$-torsion of $E$ is defined over $k(S)$, one can prove with the same techniques that there exists an injective morphism
$$
\E^{p\Phi}(S)/p\E(S) \rightarrow H^1(S,\mu_p)^2,
$$
where
$$
H^1(S,\mu_p) \simeq (k^{\times}/(k^{\times})^p) \oplus\Pic(S)[p].
$$

In this case, if $k$ is a number field, then $k^{\times}/(k^{\times})^p$ is infinite, and so the natural generalization of Theorem~\ref{thm_kummer} is not relevant (bounding a finite number by $+\infty$). This provides examples in which the geometric analogue of the Selmer group is infinite.

On the other hand, if $k$ is algebraically closed, then $k^{\times}/(k^{\times})^p=\{1\}$, and we obtain the upper bound
$$
\dim_{\F_p} \E^{p\Phi}(S)/p\E(S) \leq 4g(S),
$$
where $g(S)$ denotes the genus of $S$. The rationality of the full $p$-torsion puts strong constraints on the reduction type of $\E$, and in particular, $p\mid c_v$ for each place $v$ of bad reduction. It follows that, in this case, the natural analogues of the inequalities \eqref{inequality1} and \eqref{inequality2} are both equivalent, whatever the value of $p$ is, to 
$$
\rk_{\Z} E(k(S))\leq 4g(S)-2+\deg(\mathfrak{f}_E),
$$
a bound which is weaker than the geometric rank bound \eqref{eq:Igusa}.
\end{rmq}


\subsection{Refinements of the geometric rank bound}
\label{intro:Igusa}

Let us assume that $k$ is algebraically closed. In this case, there are well-known bounds for the rank of elliptic curves over $k(S)$ in terms of the conductor of the curve and the genus of $S$. For a nice overview of this topic, we refer the reader to \cite{shioda92}.

Let $\overline{\E}$ denote the minimal regular model of $E$ over $S$, which is also a smooth compactification of the $k$-surface $\E$, and let $\NS(\overline{\E}):=\Pic(\overline{\E})/\Pic^0(\overline{\E})$ be the N{\'e}ron-Severi group of $\overline{\E}$, which is a free $\Z$-module of finite rank. We let
$$
\rho :=\rk_\Z \NS(\overline{\E}),
$$
the so-called Picard number of $\overline{\E}$. Igusa's inequality asserts that
$$
\rho \leq b_2,
$$
where $b_2$ is the second Betti number of the surface $\overline{\E}$. If the characteristic of $k$ is not zero, $b_2$ is computed using $\ell$-adic cohomology, with $\ell\neq \car(k)$.

Let $\mathfrak{f}_E:=\sum_{v\in\Sigma} f_v\cdot v$ be the conductor of $\E$, which is a divisor on $S$. It is well-known \cite[Chap.~IV, Theorem~10.2]{silvermanII} that $f_v=1$ if the reduction type of $\E$ at $v$ is multiplicative and $f_v=2$ if the reduction is additive; the wild part of the conductor is zero since $\car(k)\neq 2, 3$.

The Grothendieck-Ogg-Shafarevich formula \cite{raynaud95} states that
\begin{equation}
\label{eq:GOS}
b_2-\rho=4g(S)-4-\rk_\Z E(k(S)) + \deg(\mathfrak{f}_E),
\end{equation}
where $\deg(\mathfrak{f}_E)=\sum_{v\in \Sigma}f_v$ is the degree of the divisor $\mathfrak{f}_E$. Therefore, Igusa's inequality is equivalent to the geometric bound:
\begin{equation*}
\rk_{\Z} E(k(S))\leq 4g(S)-4+\deg(\mathfrak{f}_E).
\end{equation*}

In \S{}\ref{sub:Igusa}, we prove the following.

\begin{thm}
\label{thm:arithmeticIgusa}
Assume that $E(\overline{k}(S))[2]=0$, and let $C$ be the smooth compactification of $\E[2]\setminus\{0\}$. Then
\begin{equation}
\label{eq:2descentkbar}
\begin{split}
4g(S)-4+\deg(\mathfrak{f}_E) = 2g(C) &- 2g(S)  + \#\{v\in \overline{\Sigma}, 2\mid c_v\} \\
& +\#\{v\in \overline{\Sigma}, \text{the red. type of $\E$ at $v$ is $\mathrm{I}_{2n}^*$ for some $n\geq 0$} \}.
\end{split}
\end{equation}
It follows that the bound \eqref{inequality2} is a refinement of the geometric rank bound \eqref{eq:Igusa}.
\end{thm}

Thus, as mentioned in the introduction, we obtain a positive answer to Ulmer's question from the survey \cite[\S{}9]{ulmer04}. This refinement allows one to improve on known bounds even in characteristic zero (see Example~\ref{exm:tomasz} and Theorem \ref{thm:0bound3}).

As a matter of fact, another refinement occurs via $3$-descent, provided the field $k$ does not contain cube roots of unity.

\begin{thm}
\label{thm:3arithmeticIgusa}
Assume that the prime $3$ satisfies (i)--(iii). Let $C$ be the smooth compactification of $\E[3]\setminus\{0\}$, and let $C^+$ be the quotient of $C$ by the involution $P\mapsto -P$. Then
\begin{equation}
\label{eq:3descentkbar}
4g(S)-4+\deg(\mathfrak{f}_E) = g(C) - g(C^+)  + \#\{v\in \overline{\Sigma}, 3\mid c_v\}.
\end{equation}
It follows that, for $p=3$, the bound \eqref{inequality1} is a refinement of the geometric rank bound \eqref{eq:Igusa} provided the field $k$ does not contain cube roots of unity.
\end{thm}

\begin{rmq}
In fact, Theorem~\ref{thm_kummer} provides a family of upper bounds on the rank of $E$ over $k(S)$, one for each prime $p\neq \car(k)$ satisfying assumptions (i)--(iii). While over $\overline{k}$, the bound for $p=2$ is equivalent to the geometric rank bound \eqref{eq:Igusa},  the same phenomenon does not hold for $p=3$, as Theorem~\ref{thm:3arithmeticIgusa} shows:  the difference between the rank bound coming from $3$-descent and the geometric rank bound is equal to $g(C) - g(C^+)$ over $\overline{k}$. In fact, the genus of $C$ increases with $p$, and so for large $p$ the bound \eqref{inequality1} will be weaker than the bound \eqref{eq:Igusa} over $\overline{k}$ (also note that when $p\geq 5$ our $p$-descent bound may be improved as in Remark~\ref{rmq:Dokchitser}). However, over nonalgebraically closed fields $k$, it may still be the case that \eqref{inequality1} provides an improved bound, depending on the size of the rational $p$-torsion subgroup of the Jacobian of $C$ (see Remark \ref{rm:SilConjecture}).
\end{rmq}

\begin{rmq}
When $k$ is an algebraically closed field of positive characteristic, the bound \eqref{eq:Igusa} can be improved under the assumption that the formal Brauer group of $\E$ has finite height $h$, in which case it was proved by Artin and Mazur \cite{AM77} that $b_2-\rho\geq 2h$. This result, which relies on computations in crystalline cohomology, is called the Igusa-Artin-Mazur inequality. As with Igusa's inequality, this is a result of geometric nature.
\end{rmq}

We end this section by discussing the case when $k$ has characteristic zero, and in particular, the case when $k$ is a number field.  When $k$ is an algebraically closed field of characteristic zero, the bound \eqref{eq:Igusa} can be improved as follows:  Lefschetz's inequality states that $\rho\leq h^{1,1}$, and hence it follows from Hodge theory that $b_2-\rho\geq 2p_g$, where $p_g$ is the geometric genus of the surface $\E$. By Grothendieck-Ogg-Shafarevich \eqref{eq:GOS}, we conclude that
\begin{equation}
\label{eq:0bound}
\rk_{\Z} E(k(S))\leq 4g(S)-4 +\deg(\mathfrak{f}_E) -2p_g.
\end{equation}

In contrast to the bound \eqref{eq:Igusa} in positive characteristic, Lefschetz's bound \eqref{eq:0bound} is not known to be sharp when the right-hand side is large, even over the field of complex numbers.  In fact,  we do not even know whether or not there exist non-constant elliptic curves over $\C(t)$ with arbitrarily large rank (the current record, due to Shioda \cite{shioda92}, is a curve of rank $68$ over $\C(t)$).

As the next example shows, when working over $\Q(t)$, the inequality \eqref{inequality2} allows one in practice to improve on the bound \eqref{eq:0bound}, provided one is able to compute the rational $2$-torsion in the Jacobian of a certain trigonal curve.

\begin{exm}
\label{exm:tomasz}
Let $q\geq 5$ be a prime number, and let $a\in\Z$ be an odd integer. Let $E$ be the elliptic curve over $\Q(t)$ defined by the equation
$$
y^2 = x^3 + t^q + a.
$$
Since the polynomial $t^q+a$ is square-free, it follows from Tate's algorithm \cite{tate75} that $E$ has additive reduction of type $\mathrm{II}$ at roots of $t^q+a$, and additive reduction of type $\mathrm{II}$ or $\mathrm{II}^*$ at infinity (because $q\equiv 1 ~\text{or}~ 5\pmod{6}$). The degree of the conductor of $E$ is $2(q+1)$.
One also computes that the geometric genus is $p_g=\left\lfloor \frac{q-1}{6} \right\rfloor$. Hence, according to \eqref{eq:0bound}, we have the bound
$$
\rk_{\Z} E(\Qbar(t)) \leq 2q-2-2\left\lfloor \frac{q-1}{6} \right\rfloor.
$$

Let us now compute the bound \eqref{inequality2} from Theorem~\ref{thm_kummer}. The Tamagawa numbers of $E$ are all equal to $1$, hence
$$
\rk_{\Z} E(\Q(t)) \leq \dim_{\F_2} \Pic(C)[2],
$$
where $C$ is the curve defined by the equation $x^3=t^q+a$. According to \cite[Theorem~2.6]{tomasz16}, the Jacobian of $C$ has no rational $2$-torsion, the integer $a$ being odd. Therefore, \eqref{inequality2} yields
$$
\rk_{\Z} E(\Q(t)) =0.
$$
On the other hand, the bound \eqref{inequality2} over $\Qbar$ is equivalent to the geometric rank bound:
$$
\rk_{\Z} E(\Qbar(t)) \leq 2q-2.
$$
\end{exm}

\begin{exm}
\label{exm2}
Let $\beta\in\Q^\times$ and let $E$ be the elliptic curve over $\Q(t)$ defined by
$$
y^2=x^3+tx^2+t^2(t^2+\beta).
$$
The singular fibers of this curve are listed below (we assume that $\beta\neq 2^2\cdot 3^{-6}$ so that $27(t^2+\beta)+4t$ is not a square over $\Qbar$):
\begin{center}
\begin{tabular}{r|cccc}
& $t^2+\beta=0$ & $27(t^2+\beta)+4t=0$ & $t=0$ & $t=\infty$ \\
\hline
fiber type & $\mathrm{I}_1$ & $\mathrm{I}_1$ & $\mathrm{IV}$ & $\mathrm{IV}$ \\
order of $\Phi_v$ & $1$ & $1$ & $3$ & $3$ \\
\end{tabular}
\end{center}

Its conductor is
$$
\mathfrak{f}_E = \{t^2+\beta=0\} + \{27(t^2+\beta)+4t=0\} + 2\cdot (\{0\}+ \{\infty\})
$$
which has degree $8$.
Moreover, $\E$ is a rational elliptic surface, hence has geometric genus $p_g=0$, and so the bounds \eqref{eq:Igusa} and \eqref{eq:0bound} agree and yield:
$$
\rk_{\Z} E(\Qbar(t)) \leq \deg(\mathfrak{f}_E) - 4 = 4.
$$

In fact, this bound is sharp and the exact value of the rank over $\Qbar(t)$ is $4$. More generally, the structure of the Mordell-Weil lattice and related information for $\E$ can be found in entry No.~11 in the table of Oguiso and Shioda \cite{OS}, who classified Mordell-Weil lattices of rational elliptic surfaces over algebraically closed fields.

On the other hand, let $C$ be the curve defined by the equation $x^3+tx^2+t^2(t^2+\beta)=0$. The Tamagawa numbers being odd, none of the bad fibers contributes to the arithmetic bound \eqref{inequality2}, and hence we have
$$
\rk_{\Z} E(\Q(t)) \leq \dim_{\mathbb{F}_2} \Pic(C)[2].
$$

By substituting $X=x/t$ and $Y=2t+X^3+X^2$, we find that $C$ is a hyperelliptic curve of genus $2$, with equation
$$
Y^2=X^4(X+1)^2-4\beta.
$$

Let $s$ be the number of irreducible factors of $X^4(X+1)^2-4\beta$ over $\Q$; then
$$
\dim_{\mathbb{F}_2} \Pic(C)[2] \leq \dim_{\mathbb{F}_2} J_C(\Q)[2] =
\begin{cases} s-1 & \text{if all factors of $X^4(X+1)^2-4\beta$ have even degree} \\ s-2 &\text{otherwise} \end{cases}
$$

We compute some examples in the following table:
\begin{center}
\begin{tabular}{r|cccccc}
$\beta$ & $2^25^43^{-12}$ & $2^23^47^{-6}$ & $1$ & $9$ & ${-3^4}5^42^{-8}$ & $2$  \\
\hline
factorization type of $X^4(X+1)^2-4\beta$ & $[1,1,2,2]$ & $[1,1,1,3]$ & $[1,2,3]$ & $[3,3]$ & [2,4] & $[6]$  \\
rank bound over $\Q(t)$ & $2$ & $2$ & $1$ & $0$ & $1$ & $0$ \\
\end{tabular}
\end{center}
In fact, it can be shown that these examples exhaust all of the possible factorization types for $\beta\in\Q^\times\setminus \{2^2\cdot 3^{-6}\}$, and it's not hard to parametrize each possibility. For instance, the first four factorization types can occur only when $\beta$ is a perfect square, and the factorization type $[1,1,2,2]$ occurs precisely when $\beta=\frac{(\alpha^3+\alpha)^4(\alpha-1)^2}{4(\alpha^3+1)^6}$ for some $\alpha\in \mathbb{Q}$.
\end{exm}

In a recent preprint \cite[Theorem~7.1]{bhargava17}, Bhargava \emph{et al.} have given the following bounds for $2$-torsion over finite fields:
\begin{thm}
\label{thm:Bhargava}
Let $C$ be a smooth projective curve of genus $g$ over $\mathbb{F}_q$.  Then
\begin{align*}
\dim_{\F_2} \Pic(C)[2] \leq \log_2\left(\frac{q^{g+1}-1}{q-1}\right).
\end{align*}
If $C$ admits a degree $n$ map to $\mathbb{P}^1$ (over $\mathbb{F}_q$) then
\begin{align*}
\dim_{\F_2} \Pic(C)[2] \leq \left(1-\frac{1}{n}\right)g\log_2 q+O_n(1).
\end{align*}
\end{thm}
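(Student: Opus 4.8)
The plan is to bound the cardinality $|\Pic(C)[2]|$ directly by exhibiting an explicit injection of this finite group into a set whose size we control: a projective space $\mathbb{P}^g(\F_q)$ for the first bound, and the $\F_q$-points of a variety of dimension $(1-1/n)g$ for the second.

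For the first bound I would argue as follows. By Lang's theorem the $\Pic^0(C)$-torsor $\Pic^g(C)$ has an $\F_q$-point, so I may fix a rational divisor class $[M_0]$ of degree $g$. For each $[L]\in\Pic(C)[2]$ the class $[L]+[M_0]$ has degree $g$, hence $h^0\geq 1$ by Riemann--Roch and it is effective; choose an $\F_q$-rational effective divisor $A_L$ in this class. Since $2[L]=0$ we get $2A_L\sim 2M_0$, so $2A_L$ is an effective divisor lying in the \emph{fixed} complete linear system $|2M_0|$. Because $\deg(2M_0)=2g>2g-2$, Serre duality gives $h^1(2M_0)=0$ and therefore
\[
\dim|2M_0| \;=\; h^0(2M_0)-1 \;=\; 2g-g+1-1 \;=\; g,
\]
so $|2M_0|\cong\mathbb{P}^g$ and $|2M_0|(\F_q)$ has exactly $\tfrac{q^{g+1}-1}{q-1}$ points. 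The assignment $[L]\mapsto 2A_L$ is injective regardless of the choices made: from the divisor $2A_L$ one recovers $A_L$ by halving, hence $[A_L]=[L]+[M_0]$ and thus $[L]$. This yields $|\Pic(C)[2]|\leq\tfrac{q^{g+1}-1}{q-1}$, which is the first bound after taking $\log_2$.

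For the second bound the strategy is to organize the count relative to the degree $n$ map $\phi\colon C\to\mathbb{P}^1$; the image of the injection above consists of the \emph{even} divisors $2A_L$, and the aim is to count these far more efficiently than by the crude inclusion $|2M_0|(\F_q)\cong\mathbb{P}^g(\F_q)$. Working over $\Spec\F_q[t]\subset\mathbb{P}^1$, let $A$ be the coordinate ring of $\phi^{-1}(\mathbb{A}^1)$, a projective $\F_q[t]$-module of rank $n$. Each class in $\Pic(C)[2]$ is represented (up to a bounded discrepancy coming from the fibre over $\infty$) by an ideal $\mathfrak{a}\subseteq A$ with $\mathfrak{a}^2=(\alpha)$ principal and with norm a monic polynomial in $t$ of degree $O(g)$, exactly as in the number field treatment of $2$-torsion in class groups. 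The generator $\alpha$ is determined by $\mathfrak{a}$ up to squares of units of $A$, and $N(\alpha)=N(\mathfrak{a})^2$ is a perfect square of bounded degree. I would then bound the number of admissible $\mathfrak{a}$ by a geometry-of-numbers count over $\F_q[t]$, where Riemann--Roch on $C$ (counting functions with prescribed pole orders along the fibres of $\phi$) plays the role of counting lattice points in a box, and where the self-dual constraint that $\mathfrak{a}$ is a \emph{square root} of a principal ideal forces the relevant data into a subfamily of dimension $(1-1/n)g$ rather than the naive $g$. Counting $\F_q$-points gives $|\Pic(C)[2]|\leq c_n\,q^{(1-1/n)g}$ with $c_n$ independent of $q$ (absorbing the $O_n(1)$ possible splitting types of $\phi_*L\cong\bigoplus_i\mathcal{O}_{\mathbb{P}^1}(a_i)$), which is the claimed bound after taking $\log_2$.

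The main obstacle is precisely the exponent $(1-1/n)$ in the second bound: proving that the square-root constraint genuinely removes a $\tfrac1n$-proportion of the degrees of freedom is the heart of the matter and mirrors the delicate geometry-of-numbers estimate in the number field proof, whereas the naive count (ignoring self-duality) only reproduces the weaker exponent $g$ from the first part. The secondary points I would need to settle are routine by comparison: the passage between $\Pic(C)$ and $J_C(\F_q)$ is harmless since $\Pic(C)[2]\subseteq J_C(\F_q)[2]$; the fibres of $\phi$ over the branch locus and over $\infty$ affect only the bounded factor $c_n$; and the existence of representatives $\mathfrak{a}$ with controlled norm-degree again follows from Riemann--Roch.
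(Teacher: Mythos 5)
First, a remark on the comparison you were asked to be judged against: the paper does not prove this statement at all. It is quoted verbatim from Bhargava \emph{et al.} \cite[Theorem~7.1]{bhargava17}, so the only meaningful benchmark is the proof in that reference. Your argument for the \emph{first} inequality is correct and complete, and is the natural function-field argument (essentially that of the cited source): Lang's theorem together with the vanishing of the Brauer group of $\F_q$ gives a rational degree-$g$ class $[M_0]$ and, via Riemann--Roch, a rational effective divisor $A_L$ in each class $[L]+[M_0]$; since $\deg(2M_0)=2g>2g-2$, the complete linear system $|2M_0|$ is a $\mathbb{P}^g$ with exactly $\frac{q^{g+1}-1}{q-1}$ rational points; and $[L]\mapsto 2A_L$ is injective because an effective divisor all of whose multiplicities are even has a unique half. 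No complaints there.

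The \emph{second} inequality, however, is not proved by your proposal; it is a program whose decisive step is missing, exactly where you yourself locate ``the heart of the matter.'' The whole content of the exponent $(1-1/n)$ is the claim that the constraint $\mathfrak{a}^2=(\alpha)$, $N(\alpha)=N(\mathfrak{a})^2$ a square, confines the relevant count to $q^{(1-1/n)g+O_n(1)}$; in your write-up this appears only as the assertion that the self-duality ``forces the relevant data into a subfamily of dimension $(1-1/n)g$,'' with no argument whatsoever. In the number-field setting this step is the technical core of \cite{bhargava17} (a delicate count of lattice points with square norm, in which the naive count gives no saving at all), and carrying it out over $\F_q[t]$ is precisely what a proof must do: Riemann--Roch supplies the ``box,'' but not the saving. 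There is also a concrete secondary error: your parenthetical that there are $O_n(1)$ splitting types $\phi_*L\cong\bigoplus_i\mathcal{O}_{\mathbb{P}^1}(a_i)$ is false. For nontrivial $2$-torsion $L$ one has $h^0(\phi_*L)=0$ and fixed total degree, so the number of possible splitting types grows like $g^{n-1}$; this bookkeeping cannot simply be absorbed into a constant $c_n$, and the count must be organized so that splitting types are not enumerated separately. As it stands, then, your proposal establishes the first bound but not the second.
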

We note that these results only improve on the trivial bound $\dim_{\F_2} \Pic(C)[2]\leq 2g(C)$ when $q$ is very small. For example, if $C$ is trigonal, then we should take $q\leq 7$. If $C$ is a curve over a number field $k$, then we consequently obtain bounds for $\dim_{\F_2} \Pic(C)[2]$ whenever $C$ has good reduction at a prime of small (odd) norm.  Thus, when $k$ is a number field, Theorem \ref{thm:Bhargava} leads to improvements of the inequalities \eqref{eq:Igusa} and \eqref{eq:0bound} under suitable hypotheses.  Let $\chi=1-g(S)+p_g$, where $p_g$ is the geometric genus of the surface $\E$.  Then (still assuming $\car(k)=0$) we have the well-known inequalities \cite{shioda92}
\begin{align*}
2\chi+2-2g(S)\leq \deg(\mathfrak{f}_E)\leq 12\chi.
\end{align*}

In particular, in terms of the invariant $\chi$, the bound \eqref{eq:0bound} yields another well-known inequality for the rank when $\car(k)=0$:
\begin{equation}
\label{eq:0bound2}
\rk_{\Z} E(k(S))\leq 10\chi +2g(S)-2.
\end{equation}
As a sample application, under suitable good reduction hypotheses, in Section \ref{sub:0bound} we prove the following asymptotic improvement to \eqref{eq:0bound2} over the rational numbers.
\begin{thm}
\label{thm:0bound3}
Let $E$ be an elliptic curve over $\Q(S)$ and suppose that $E(\Qbar(S))[2]=0$. Let $C$ be the smooth compactification of $\E[2]\setminus\{0\}$. 
\begin{enumerate}
\item[1)] If $C$ has good reduction at $3$, then
\begin{align}
\label{imp1}
\rk_{\Z} E(\Q(S))\leq 6 (\log_23)\chi+3(\log_23)g(S)-(\log_23-1)\sum_{v\in\overline{\Sigma}}\varepsilon_v-\log_23-1,
\end{align}
where the $\varepsilon_v$ are defined in Lemma~\ref{lem:C2ramif}.

\item[2)] If $\Q(S)=\mathbb{Q}(t)$ and $C$ has good reduction at $p\in \{3,5\}$, then
\begin{align}
\label{imp2}
\rk_{\Z} E(\Q(t))\leq 4(\log_2p)\chi +O(1).
\end{align}
\end{enumerate}
\end{thm}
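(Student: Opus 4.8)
The plan is to feed the arithmetic bound of Theorem~\ref{thm_kummer} in the case $p=2$ into the point-counting estimates of Theorem~\ref{thm:Bhargava}, after transporting the computation of $\dim_{\F_2}\Pic(C)[2]$ from $\Q$ to a finite field by reduction at the good prime $p\in\{3,5\}$. Throughout, $C\to S$ is the canonical finite map of degree $p^2-1=3$, so that when $S=\mathbb{P}^1$ the curve $C$ is trigonal; this is exactly what makes the sharper, second estimate of Theorem~\ref{thm:Bhargava} available in part~2).

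First I would set up the reduction step. Let $J_C$ be the Jacobian of $C$. Since $C$ has good reduction at the odd prime $p$, so does $J_C$, its N\'eron model over the local ring at $p$ is an abelian scheme, and because $2$ is prime to $p$ the reduction map is injective on $2$-torsion. Combining this with the inclusion $\Pic(C)[2]\subset J_C(\Q)[2]$ recorded after Theorem~\ref{thm_kummer}, and with $J_{\overline C}(\F_p)[2]\subset\Pic(\overline C)[2]$ (valid since $\mathrm{Br}(\F_p)=0$), I obtain the chain
\begin{equation*}
\dim_{\F_2}\Pic(C)[2]\ \leq\ \dim_{\F_2}J_C(\Q)[2]\ \leq\ \dim_{\F_2}J_{\overline C}(\F_p)[2]\ \leq\ \dim_{\F_2}\Pic(\overline C)[2],
\end{equation*}
where $\overline C$ is the reduction of $C$ modulo $p$. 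The right-hand side is a quantity over $\F_p$ to which Theorem~\ref{thm:Bhargava} applies: the general estimate $\log_2\!\big((p^{g(C)+1}-1)/(p-1)\big)\leq (g(C)+1)\log_2 p-1$ for part~1), and the trigonal estimate $\frac{2}{3}g(C)\log_2 p+O(1)$ for part~2).

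Next I would express $g(C)$ in terms of $\chi$ and $g(S)$. Over $\overline\Q$, Theorem~\ref{thm:arithmeticIgusa} identifies the right-hand side of \eqref{inequality2} with the geometric bound, giving $2g(C)-2g(S)+(\text{local terms})=4g(S)-4+\deg(\mathfrak f_E)$; together with the Ogg conductor--discriminant relation $\deg(\mathfrak f_E)=12\chi-\sum_v(m_v-1)$ this yields a clean formula
\begin{equation*}
g(C)=6\chi+3g(S)-2-\sum_{v\in\overline\Sigma}\varepsilon_v,
\end{equation*}
where $\varepsilon_v$ is the local invariant built from $m_v$, the divisibility $2\mid c_v$, and the occurrence of type $\mathrm{I}_{2n}^*$. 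Feeding this into the Bhargava estimate and adding the bad-reduction corrections of \eqref{inequality2} (which over $\Q$ are dominated by $\sum_{v\in\overline\Sigma}\varepsilon_v$, while $\dim_{\F_2}\Pic(S)[2]\geq0$ may be discarded) gives, for part~1) with $p=3$,
\begin{equation*}
\rk_\Z E(\Q(S))\ \leq\ (g(C)+1)\log_23-1+\sum_{v\in\overline\Sigma}\varepsilon_v,
\end{equation*}
which, after substituting the genus formula, is exactly the claimed bound. For part~2) with $g(S)=0$ the trigonal estimate gives $\frac{2}{3}g(C)\log_2 p=4\chi\log_2 p-\frac{2}{3}(\log_2 p)\sum_{v}\varepsilon_v+O(1)$, and since the coefficient $1-\frac{2}{3}\log_2 p$ of $\sum_v\varepsilon_v$ is negative for $p\in\{3,5\}$, the $\varepsilon_v$-terms may be dropped, leaving $4(\log_2 p)\chi+O(1)$.

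The main obstacle is the bookkeeping at the bad fibres: one must establish the exact genus formula above and the place-by-place inequality guaranteeing that the corrections $\#\{v:2\mid c_v\}+\#\{v:\mathrm{I}_{2n}^*\}$ of \eqref{inequality2}, taken over $\Q$, are bounded by $\sum_{v\in\overline\Sigma}\varepsilon_v$. This is where the precise definition of $\varepsilon_v$ and the comparison of local reduction data over $\Q$ and over $\overline\Q$ (as in comment~(4) following Theorem~\ref{thm_kummer}) must be used with care. The remaining inputs---injectivity of reduction on $2$-torsion and the conductor bound $\deg(\mathfrak f_E)\leq 12\chi$---are standard, and the genuine arithmetic gain over \eqref{eq:0bound2} comes solely from replacing the geometric count $\dim_{\F_2}\Pic(C)[2]=2g(C)$, valid over $\overline\Q$, by the much smaller finite-field count supplied by Theorem~\ref{thm:Bhargava}.
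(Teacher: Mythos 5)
Your proposal takes essentially the same route as the paper's proof: feed \eqref{inequality2} (discarding $-\dim_{\F_2}\Pic(S)[2]$ and dominating the two local correction terms by $\sum_{v\in\overline{\Sigma}}\varepsilon_v$) into Theorem~\ref{thm:Bhargava}, after transporting $\Pic(C)[2]$ to $\F_p$ via injectivity of reduction on $2$-torsion at the good odd prime, and control $g(C)$ by Riemann--Hurwitz together with $12\chi=\deg(\mathfrak{f}_E)+\sum_{v\in\overline{\Sigma}}(m_v-1)$. For part 2) you should also say explicitly, as the paper does, that the \emph{reduction} of $C$ modulo $p$ still admits a map of degree $\leq 3$ to $\mathbb{P}^1$ (gonality does not increase under good reduction), since Theorem~\ref{thm:Bhargava} is applied over $\F_p$.

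One misstatement should be fixed, though it does not affect the conclusion. Your ``clean formula'' for the genus is not an equality: substituting $\deg(\mathfrak{f}_E)=12\chi-\sum_{v}(m_v-1)$ into $g(C)=3g(S)-2+\frac{1}{2}\left(\deg(\mathfrak{f}_E)-\sum_{v}\varepsilon_v\right)$ gives
$$
g(C)=6\chi+3g(S)-2-\frac{1}{2}\sum_{v\in\overline{\Sigma}}\left((m_v-1)+\varepsilon_v\right),
$$
which agrees with your expression $6\chi+3g(S)-2-\sum_{v\in\overline{\Sigma}}\varepsilon_v$ only when $\varepsilon_v=m_v-1$ at every bad place; this fails, for instance, for fibers of type $\mathrm{I}_n$ with $n\geq 3$ odd, or of type $\mathrm{IV}$, $\mathrm{IV}^*$, $\mathrm{II}^*$, $\mathrm{I}_0^*$. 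Since $\varepsilon_v\leq m_v-1$ holds at every bad place (a case check on Kodaira types), your formula is correct as the upper bound $g(C)\leq 6\chi+3g(S)-2-\sum_{v\in\overline{\Sigma}}\varepsilon_v$, and because only an upper bound on $g(C)$ enters the Bhargava estimates, the rest of your computation --- including the sign check that $1-\frac{2}{3}\log_2 p<0$ for $p\in\{3,5\}$ in part 2) --- is valid and yields exactly the stated bounds. The paper performs the same bookkeeping in a slightly different order, keeping $\deg(\mathfrak{f}_E)$ explicit throughout and invoking $\deg(\mathfrak{f}_E)+\sum_{v\in\overline{\Sigma}}\varepsilon_v\leq 12\chi$ only in the final chain of inequalities, which avoids ever asserting the false equality.
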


\begin{rmq}
If $E(\Qbar(S))[2]\neq 0$, then Cox \cite{cox82} proved the better (geometric) bound:
\begin{align*}
\rk_{\Z} E(\Qbar(S))\leq 6\chi+2g(S)-2.
\end{align*}
\end{rmq}

\begin{rmq}
The numerical expansions of the coefficients of $\chi$ in Theorem \ref{thm:0bound3} are:
\begin{align*}
6 (\log_23)&=9.509775\cdots\\
4 (\log_23)&=6.339850\cdots\\
4 (\log_25)&=9.287712\cdots.
\end{align*}
\end{rmq}

If $E$ is an elliptic curve defined over $\Q(t)$, there is a strategy which allows one, in specific cases, to improve on Lefschetz's bound \eqref{eq:0bound} over $\Qbar(t)$. One picks a suitable prime $p$ of good reduction for the surface $\E$, and one computes the characteristic polynomial of the Frobenius acting on the second $\ell$-adic cohomology group of the reduced surface over $\overline{\F}_p$. The details of this approach are explained in \cite[\S{}6]{vanluijk07} and \cite[\S{}4]{kloosterman07}. Obviously, one may replace $\Q$ by a number field $k$. Nevertheless, this technique is not ``arithmetic'', in the sense that the bound it provides is valid over $\Qbar$.

The question of refining Lefschetz's bound over $\Q$, and more generally over number fields, has been addressed previously by various authors, see the discussion in \cite[\S{}13.12]{SchSch10}. In this respect, to our knowledge, only specific elliptic surfaces (e.g., K3 surfaces)  have been studied so far. So, it seems to us that Theorem~\ref{thm_kummer} is the first general result which provides an upper bound of an ``arithmetic nature'' on the rank of elliptic curves over function fields.

We end with a speculative remark on using our results to prove statements towards Silverman's conjectural analogue of \eqref{eq:Brumer} when the base field $k$ is a number field instead of a finite field.

\begin{rmq}
\label{rm:SilConjecture}
Silverman \cite[Conjecture 4]{silverman04} has conjectured that if $k$ is a number field and $E$ is a non-isotrivial elliptic curve over $k(S)$, then
\begin{align}
\label{eq:Sil}
\rk_{\Z} E(k(S))\ll \frac{\deg(\mathfrak{f}_E)}{\log \deg(\mathfrak{f}_E)},
\end{align}
where the implied constant depends only on $k$ and $S$ (in fact, Silverman states a more precise conjecture).

To approach \eqref{eq:Sil} using Theorem~\ref{thm_kummer}, one needs strong bounds on the rank of $p$-torsion in $\Pic(C)$.  A possible argument for the existence of such bounds is as follows.  Let $p$ be a prime and $d$ a positive integer. Brumer and Silverman \cite{BS} have raised the question of the existence of a constant $c_{p,d}$ such that for every number field $k/\Q$ of degree $d$,
\begin{align}
\label{eq:class}
\dim_{\F_p} \Cl(k)[p]\leq c_{p,d}\frac{\log |\Disc k|}{\log\log |\Disc k|},
\end{align}
where $\Cl(k)$ is the ideal class group of $k$, and $\Disc k$ is the discriminant of $k$.

Let us fix a finite field $\mathbb{F}_q$.  Curves of genus $g$ and gonality $d$ over $\mathbb{F}_q$ are analogous to number fields of degree $d$ over $\mathbb{Q}$ with discriminant (roughly) $q^{2g}$.  Thus, a function field analogue of \eqref{eq:class} might assert the existence of a constant $c_{p,d,q}$ such that
\begin{align}
\label{eq:Fp}
\dim_{\F_p} \Pic(C)[p]\leq c_{p,d,q}\frac{g}{\log 2g},
\end{align}
where $C$ is any curve over $\mathbb{F}_q$ of positive genus $g$ admitting a morphism of degree $d$ to $\mathbb{P}^1$.  When $p\mid d$, the example of hyperelliptic\footnote{More precisely, by considering an equation $y^2=f(x)$ where $f\in\Q[x]$ splits as a product of $2g+1$ distinct linear factors, one obtains a hyperelliptic curve $C$ over $\Q$ for which $\dim_{\F_2}\Pic(C)[2]=2g$.} (or more generally superelliptic) curves shows that \eqref{eq:Fp} cannot be extended from finite fields to number fields $k$.  However, if $p\nmid d$, it seems possible that \eqref{eq:Fp} may continue to hold when $k$ is a number field.

Now let $k$ be a number field and let $E$ and $C$ be as in Theorem~\ref{thm_kummer} (for the prime $p$).  We have a natural morphism $C\to S$ of degree $p^2-1$, which is unramified outside of the set of places of bad reduction $\Sigma$.  We can bound the degree of the ramification divisor by $(p^2-2)\deg(\mathfrak{f}_E)$.  Assume additionally that not more than $\frac{\deg(\mathfrak{f}_E)}{\log \deg(\mathfrak{f}_E)}$ Tamagawa numbers are divisible by $p$.  Then assuming that \eqref{eq:Fp} holds for number fields when $p\nmid d$ (note that $p$ and $p^2-1$ are coprime), Theorem~\ref{thm_kummer} immediately implies that
\begin{align*}
\rk_{\Z} E(k(S))\ll \frac{\deg(\mathfrak{f}_E)}{\log \deg(\mathfrak{f}_E)},
\end{align*}
where the implied constant depends on $k$, $S$, and also $p$.  Thus, we would obtain Silverman's conjecture for various large families of elliptic curves (one family for each fixed prime $p$).
\end{rmq}


\section{Proofs}


\subsection{Proof of Theorem~\ref{thm_kummer}}
\label{sub:descent}


We denote by $\E^0$ the open subgroup of $\E$ whose fiber at each $v$ is $\E_v^0$, and we call it (by abuse of notation) the connected component of $\E$.

By globalizing \eqref{eq:compgroup}, we obtain an exact sequence for the {\'e}tale topology on $S$
$$
\begin{CD}
0 @>>> \E^0 @>>> \E @>>> \Phi:=\bigoplus_{v\in \Sigma} (i_v)_*\Phi_v @>>> 0, \\
\end{CD}
$$
where $i_v:\Spec(k_v)\to S$ denotes the canonical inclusion.

We denote by $\E^{p\Phi}$ the open subgroup scheme of $\E$ which is the inverse image of $p\Phi$ by the quotient map above, so that we have an exact sequence for the {\'e}tale topology on $S$
\begin{equation*}
\begin{CD}
0 @>>> \E^{p\Phi} @>>> \E @>>> \Phi/p\Phi @>>> 0. \\
\end{CD}
\end{equation*}
Applying global sections to this sequence, we obtain an exact sequence
$$
\begin{CD}
0 @>>> \E^{p\Phi}(S) @>>> \E(S) @>>> H^0(S,\Phi/p\Phi). \\
\end{CD}
$$

Since the group $p\E(S)$ is a subgroup of $\E^{p\Phi}(S)$, if we mod out the first two groups by $p\E(S)$ we obtain another exact sequence
\begin{equation}
\label{Ep_phi2}
\begin{CD}
0 @>>> \E^{p\Phi}(S)/p\E(S) @>>> \E(S)/p\E(S) @>>> H^0(S,\Phi/p\Phi). \\
\end{CD}
\end{equation}

Before we proceed with the proof of Theorem~\ref{thm_kummer}, we shall prove a few lemmas.

\begin{lem}
\label{cm}
Let $v\in \Sigma$ be a place of bad reduction for $\E$.
\begin{enumerate}
\item[1)] If $p\geq 3$, then
$$
\dim_{\F_p} H^0(k_v,\Phi_v[p]) = \dim_{\F_p}  H^0(k_v,\Phi_v/p\Phi_v)=\left\{
\begin{array}{ll}
1 & \text{if $p\mid c_v$}\\
0 & \text{otherwise}.
\end{array}\right.
$$
\item[2)] If $p=2$, then
$$
\dim_{\F_2} H^0(k_v,\Phi_v[2]) = \dim_{\F_2} H^0(k_v,\Phi_v/2\Phi_v)=\left\{
\begin{array}{ll}
2 & \text{if reduction type $\mathrm{I}_{2n}^*$ for some $n\geq 0$}\\
1 & \text{if $2\mid c_v$, other reduction type}\\
0 & \text{otherwise}.
\end{array}\right.
$$
\end{enumerate}
\end{lem}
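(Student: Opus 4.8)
The plan is to compute the local invariant $H^0(k_v, \Phi_v/p\Phi_v)$ by first understanding the Galois module structure of the component group $\Phi_v$ over $k_v$, and then passing to $p$-torsion/cotorsion via the identity $H^0(k_v, \Phi_v/p\Phi_v) = (\Phi_v/p\Phi_v)^{\Gal}$, the Galois-invariants of the quotient. Since $\Phi_v$ is a finite \'etale group scheme over $k_v$, the functor of points $\Phi_v(k_v) = \Phi_v(\overline{k})^{\Gal_{k_v}}$ identifies it with a Galois module, and I would work throughout with the finite abelian group $A := \Phi_v(\overline{k})$ equipped with its $\Gal(\overline{k_v}/k_v)$-action. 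The key structural input is the classification, due to Kodaira and tabulated via Tate's algorithm (see \cite[\S 10.2]{liu}), of the geometric component group $A$ according to the reduction type: for multiplicative type $\mathrm{I}_n$ one has $A \simeq \Z/n\Z$, and for the additive types the possibilities are $A = 0$ (types $\mathrm{II}, \mathrm{III}, \mathrm{IV}, \mathrm{I}_0^*$ with trivial or appropriate twist... more precisely $\mathrm{II}$), $A \simeq \Z/2\Z$, $A \simeq \Z/3\Z$, $A \simeq \Z/4\Z$, and crucially $A \simeq (\Z/2\Z)^2$ for types $\mathrm{I}_n^*$ with $n$ even (and $\Z/4\Z$ when $n$ is odd).

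For the case $p \geq 3$, the strategy is to observe that $\Phi_v/p\Phi_v$ is nonzero precisely when $p$ divides $|A| = c_v \cdot [\,\cdots]$; but one must be careful, since $c_v = \#\Phi_v(k_v) = \#A^{\Gal}$ counts only the Galois-fixed points, not the geometric order. The clean approach is this: since $p \geq 3$ and the geometric component group has order dividing $4$ in all additive cases, $p \nmid |A|$ for additive reduction, forcing $\Phi_v/p\Phi_v = 0$ there. So only multiplicative reduction $\mathrm{I}_n$ can contribute, where $A \simeq \Z/n\Z$. Then $A/pA$ is $\Z/p\Z$ if $p \mid n$ and $0$ otherwise, and I would check that the Galois action on the one-dimensional $\F_p$-space $A/pA$ is trivial \emph{if and only if} $p \mid c_v$; combining these gives the stated dichotomy. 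The main subtlety to handle carefully is the relationship between $p \mid c_v$ and the triviality of the Galois action on $A/pA$: one needs that a generator of the $p$-part surviving in the cotorsion is Galois-fixed exactly when $p$ divides the number of rational points, which follows from the cyclic structure and the fact that the Galois action on $\Z/n\Z$ is by a character into $(\Z/n\Z)^\times$.

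For the case $p = 2$, the computation is finer because the geometric group $A$ can be $(\Z/2\Z)^2$, and I would split into the three stated regimes. When the reduction is of type $\mathrm{I}_{2n}^*$ over $k_v$ in the paper's sense (meaning $A \simeq (\Z/2\Z)^2$ with all four components rational, i.e. $\Phi_v(k_v) \simeq (\Z/2\Z)^2$ by the terminology comment in the introduction), the Galois action on $A = A/2A$ is trivial, so $H^0 = (\Z/2\Z)^2$. Otherwise $A/2A$ is at most one-dimensional over $\F_2$ (here I would enumerate: multiplicative $\mathrm{I}_n$ gives $\Z/2\Z$ in the cotorsion iff $n$ even; additive types with $A \simeq \Z/2\Z$ or $\Z/4\Z$ give a one-dimensional $A/2A$; the remaining types give $0$), and I would verify that the surviving class is Galois-fixed precisely when $2 \mid c_v$, yielding $\Z/2\Z$ in that case and $0$ otherwise. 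The main obstacle, and the point requiring the most care, is the $(\Z/2\Z)^2$ case: one must distinguish the genuinely rational $\mathrm{I}_{2n}^*$ fibers (all four components defined over $k_v$) from those $\mathrm{I}_{2n}^*$ fibers where Galois permutes components nontrivially, since in the latter $A^{\Gal}$ and $(A/2A)^{\Gal}$ drop in dimension and the fiber contributes only $\Z/2\Z$ or $0$; this is exactly why the statement invokes the field of definition of the components and not merely the geometric Kodaira type. I would resolve this by reducing to the explicit Galois action tabulated in \cite[\S 10.2]{liu} and checking the invariants case by case.
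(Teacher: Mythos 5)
Your overall strategy (identify $\Phi_v$ with its group of geometric points as a Galois module, classify by Kodaira type, and compute invariants of the cotorsion via a character argument in the cyclic case) is essentially the paper's, and your treatment of $p=2$ --- in particular the careful separation of geometric $\mathrm{I}_{2n}^*$ fibers with all components rational over $k_v$ from those where Galois permutes components --- is correct and, if anything, more explicit than the paper's own proof. However, your $p\geq 3$ argument contains a false step that breaks the case $p=3$. You claim that ``the geometric component group has order dividing $4$ in all additive cases,'' hence $p\nmid |A|$ for additive reduction, forcing $\Phi_v/p\Phi_v=0$ there. This contradicts your own list a few lines earlier: reduction types $\mathrm{IV}$ and $\mathrm{IV}^*$ have geometric component group $\Z/3\Z$. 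Consequently, for $p=3$ (which is allowed here, since the standing hypotheses are only $\car(k)\neq 2,3$ and $p\neq\car(k)$) and a fiber of type $\mathrm{IV}$ or $\mathrm{IV}^*$, your argument would force $H^0(k_v,\Phi_v/3\Phi_v)=0$, whereas the lemma (correctly) gives $\Z/3\Z$ whenever Galois acts trivially on $\Phi_v\simeq\Z/3\Z$, i.e.\ when $3\mid c_v$.

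The gap is easily repaired, and the repair is exactly what the paper does: the relevant structural fact for $p\geq 3$ is not that $p\nmid |A|$ for additive types, but that the $p$-primary component of $\Phi_v(\overline{k_v})$ is \emph{cyclic} in every case (this includes $\Z/3\Z$ for types $\mathrm{IV}$, $\mathrm{IV}^*$); only $p=2$ with geometric type $\mathrm{I}_{2n}^*$ escapes this. Your character argument for multiplicative fibers --- that for a cyclic $p$-primary component, $(\Phi_v/p\Phi_v)^{\Gal}$ is nonzero if and only if the mod-$p$ character is trivial, if and only if $p\mid c_v$ --- applies verbatim to all fibers with cyclic $p$-part, so you should run it uniformly rather than dismissing the additive ones. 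With that single correction your proof coincides with the paper's.
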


\begin{proof}
It follows from the explicit description of the N{\'e}ron model of an elliptic curve (see \cite[\S{}10.2, Remark~2.24]{liu} or \cite[Chap.~IV, Table~4.1]{silvermanII}) that, if $p\geq 3$, or if $p=2$ and the reduction type of $\E$ at $v$ is not $\mathrm{I}_{2n}^*$ for some $n\geq 0$, then the $p$-primary component of $\Phi_v(\overline{k_v})$ is cyclic. In this situation, it follows from basic cohomology that the following conditions are equivalent:
\begin{enumerate}
\item the Galois module $\Phi_v/p\Phi_v$ is either trivial, or else a non-constant Galois module over $k_v$;
\item $H^0(k_v,\Phi_v/p\Phi_v)=0$;
\item $p\nmid c_v=\# \Phi_v(k_v)$.
\end{enumerate}
This proves the lemma in all cases except when $p=2$ and the reduction type is $\mathrm{I}_{2n}^*$ for some $n\geq 0$. In this case, $\Phi_v$ is isomorphic to the constant $k_v$-group scheme $(\Z/2\Z)^2$, hence the result.
\end{proof}

\begin{lem}
\label{rank_size}
\begin{enumerate}
\item[1)] If $p\geq 3$, we have
$$
\dim_{\F_p} \E^{p\Phi}(S)/p\E(S) \geq \rk_{\Z} E(k(S)) - \#\{v\in \Sigma, p\mid c_v\}.
$$
\item[2)] If $p=2$, we have
\begin{equation*}
\begin{split}
\dim_{\F_2} \E^{2\Phi}(S)/2\E(S) \geq & \rk_{\Z} E(k(S)) - \#\{v\in \Sigma, 2\mid c_v\} \\
& -\#\{v\in \Sigma, \text{the red. type of $\E$ at $v$ is $\mathrm{I}_{2n}^*$ for some $n\geq 0$} \}.
\end{split}
\end{equation*}
\end{enumerate}
\end{lem}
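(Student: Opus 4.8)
The plan is to read the bound off directly from the left-exact sequence \eqref{Ep_phi2}. All three terms there are $\F_p$-vector spaces (each is annihilated by $p$: for $\E^{p\Phi}(S)/p\E(S)$ this is because $p\cdot\E^{p\Phi}(S)\subseteq p\E(S)$). Since the first map is injective, the image of $\E(S)/p\E(S)$ in $H^0(S,\Phi/p\Phi)$ has dimension at most $\dim_{\F_p} H^0(S,\Phi/p\Phi)$, whence
$$
\dim_{\F_p} \E^{p\Phi}(S)/p\E(S) \;\geq\; \dim_{\F_p} \E(S)/p\E(S) - \dim_{\F_p} H^0(S,\Phi/p\Phi).
$$
It then suffices to give a lower bound for the first term on the right and an exact evaluation of the second.

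For the first term I would use that $\E(S)=E(k(S))$ is a finitely generated abelian group (Mordell--Weil / Lang--N\'eron, valid since the standing hypotheses force $E$ to be non-constant). For any finitely generated abelian group $A$ of rank $r$ one has $\dim_{\F_p} A/pA = r + \dim_{\F_p} A[p]\geq r$, so $\dim_{\F_p} \E(S)/p\E(S)\geq \rk_{\Z} E(k(S))$. (The irreducibility hypothesis in fact forces $E(k(S))[p]=0$, making this an equality, but only the inequality is needed.)

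For the second term, I would use that $\Phi=\bigoplus_{v\in\Sigma}(i_v)_*\Phi_v$ is a skyscraper sheaf supported on $\Sigma$, so global sections split as $H^0(S,\Phi/p\Phi)=\bigoplus_{v\in\Sigma} H^0(k_v,\Phi_v/p\Phi_v)$, and then feed in Lemma~\ref{cm}. For $p\geq 3$ each nonzero local term is one copy of $\Z/p\Z$, occurring exactly when $p\mid c_v$, so $\dim_{\F_p} H^0(S,\Phi/p\Phi)=\#\{v\in\Sigma:p\mid c_v\}$; substituting into the displayed inequality gives part~1).

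The only genuine bookkeeping, and where I expect the mild subtlety to lie, is the case $p=2$. Here a place of type $\mathrm{I}_{2n}^*$ contributes $(\Z/2\Z)^2$ (dimension $2$), while any other place with $2\mid c_v$ contributes $\Z/2\Z$ (dimension $1$). The key observation is that type $\mathrm{I}_{2n}^*$ forces $\Phi_v(k_v)\simeq(\Z/2\Z)^2$, hence $c_v=4$ and $2\mid c_v$; thus the $\mathrm{I}_{2n}^*$ places form a subset of those with $2\mid c_v$. Writing $\#\{v:2\mid c_v\}$ as the number of $\mathrm{I}_{2n}^*$ places plus the number of remaining places with $2\mid c_v$, one checks that
$$
\dim_{\F_2} H^0(S,\Phi/2\Phi) = \#\{v\in\Sigma:2\mid c_v\} + \#\{v\in\Sigma:\text{red. type } \mathrm{I}_{2n}^* \text{ for some } n\geq 0\},
$$
which is exactly the correction term of part~2). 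Substituting into the displayed inequality completes the proof.
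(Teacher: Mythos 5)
Your proposal is correct and follows essentially the same route as the paper: both read the dimension inequality off the left-exact sequence \eqref{Ep_phi2}, identify $\dim_{\F_p}\E(S)/p\E(S)$ with $\rk_{\Z}E(k(S))$ (the paper uses irreducibility of $E[p]$ to get equality; you correctly note only the inequality is needed), and evaluate $H^0(S,\Phi/p\Phi)=\bigoplus_{v\in\Sigma}H^0(k_v,\Phi_v/p\Phi_v)$ via Lemma~\ref{cm}. Your explicit $p=2$ bookkeeping --- that a place of type $\mathrm{I}_{2n}^*$ has $c_v=4$, hence lies in the set $\{v:2\mid c_v\}$, so the total dimension is $\#\{v:2\mid c_v\}+\#\{v:\text{type }\mathrm{I}_{2n}^*\}$ --- is exactly the counting the paper leaves implicit in the phrase ``Lemma~\ref{cm} allows one to compute the $\F_p$-dimension of each of these groups.''
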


\begin{proof}
It follows from the exact sequence \eqref{Ep_phi2} that
$$
\dim_{\F_p} \E^{p\Phi}(S)/p\E(S) \geq \dim_{\F_p} \E(S)/p\E(S) - \dim_{\F_p} H^0(S,\Phi/p\Phi).
$$
By the universal property of the N{\'e}ron model, $\E(S)/p\E(S)=E(k(S))/pE(k(S))$. Because $E[p]$ satisfies (i)--(iii), $E(k(S))$ has no nontrivial $p$-torsion, hence
$$
\dim_{\F_p} \E(S)/p\E(S) = \rk_{\Z} E(k(S)).
$$
On the other hand,
$$
H^0(S,\Phi/p\Phi) = \bigoplus_{v\in\Sigma} H^0(k_v,\Phi_v/p\Phi_v),
$$
and Lemma~\ref{cm} allows one to compute the $\F_p$-dimension of each of these groups.
\end{proof}

Let $j:\eta=\Spec(k(S))\to S$ be the inclusion of the generic point of $S$. Let us recall that, if $H$ is a $k(S)$-group scheme, then by definition the N{\'e}ron model of $H$ is a smooth separated $S$-group scheme of finite type which represents the sheaf $j_*H$ on the smooth site of $S$. Therefore, we have an exact sequence
$$
0 \longrightarrow H^1(S,j_*H) \longrightarrow H^1(k(S),H) \longrightarrow H^0(S,R^1j_*H) \longrightarrow \cdots.
$$

In particular, if $G$ is a N{\'e}ron model of its generic fiber, then the restriction to the generic fiber induces an injection
\begin{equation}
\label{injNeron}
H^1(S,G) \hookrightarrow H^1(k(S),G)
\end{equation}

\begin{rmq}
The image of \eqref{injNeron} can be described as the set of Galois cohomology classes which are unramified everywhere. We shall not make use of this description, but it may be helpful to keep in mind.
\end{rmq}

\begin{lem}
\label{unramified}
\begin{enumerate}
\item[1)] The group scheme $\E[p]$ is the N{\'e}ron model of $E[p]$.
\item[2)] If $G$ is a finite {\'e}tale $S$-group scheme, then $G$ is the N{\'e}ron model of its generic fiber.
\end{enumerate}
\end{lem}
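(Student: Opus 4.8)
The plan is to verify in both cases the Néron mapping property in the form extracted from the definition recalled just above the lemma: a smooth separated $S$-group scheme $G$ of finite type is the Néron model of its generic fiber $G_\eta$ precisely when, for every smooth $S$-scheme $T$, the restriction map $G(T)\to G_\eta(T_\eta)$ is bijective, where $T_\eta=T\times_S\eta$. I would first record that both objects qualify as candidates: $\E[p]$ is étale (hence smooth) by hypothesis, and it is the scheme-theoretic kernel of $[p]\colon\E\to\E$, hence a closed subgroup scheme of the separated finite-type $S$-scheme $\E$, so it is itself separated and of finite type; a finite étale $G$ is trivially smooth, separated and of finite type.

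For part 1) I would deduce the mapping property for $\E[p]$ directly from that of $\E$. Since $\E$ is the Néron model of $E$, for every smooth $T$ the restriction $\E(T)\to E(T_\eta)$ is an isomorphism of abelian groups, and it is equivariant for multiplication by $p$ because $[p]$ on $\E$ restricts to $[p]$ on $E$. As $\E[p]$ is the scheme-theoretic kernel of $[p]$ and the functor $\operatorname{Hom}_S(T,-)$ is left exact, one has $\E[p](T)=\ker\bigl([p]\colon\E(T)\to\E(T)\bigr)$ and likewise $E[p](T_\eta)=\ker\bigl([p]\colon E(T_\eta)\to E(T_\eta)\bigr)$. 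The isomorphism $\E(T)\xrightarrow{\sim}E(T_\eta)$ therefore identifies these kernels, giving the desired bijection and proving that $\E[p]$ is the Néron model of $E[p]$. Note this argument does not use finiteness of $\E[p]\to S$, which is fortunate since that map is only quasi-finite over $\Sigma$.

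The substance of the lemma is part 2), where $G\to S$ is finite étale but one no longer has an ambient Néron model to appeal to; here I would prove bijectivity of $G(T)\to G_\eta(T_\eta)$ by hand. Throughout I use that $T$ is smooth over $S$, hence flat over the integral Dedekind scheme $S$, so every irreducible component of $T$ dominates $S$ and $T_\eta$ is (schematically) dense in $T$; moreover $T$ is smooth over the perfect field $k$, hence regular, and in particular normal and reduced. Injectivity is then immediate: two $S$-morphisms $T\to G$ agreeing on the dense open $T_\eta$ agree on all of $T$ because $G\to S$ is separated and $T$ is reduced. For surjectivity, given $f_\eta\colon T_\eta\to G_\eta$ I would base change to form the finite étale cover $G_T:=G\times_S T\to T$; the morphism $f_\eta$ is a section of $G_T$ over $T_\eta$, whose image is an open and closed subscheme isomorphic to $T_\eta$. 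Reducing to the case $T$ connected (hence integral, as $T$ is normal and locally Noetherian), I would take $W$ to be the schematic closure of this section in $G_T$. Then $W\hookrightarrow G_T\to T$ is finite (a closed subscheme of the finite $T$-scheme $G_T$), birational (an isomorphism over the dense open $T_\eta$), and $W$ is integral; since $T$ is normal, a finite birational morphism of integral schemes onto a normal target is an isomorphism. The inverse $T\xrightarrow{\sim}W$ composed with $W\hookrightarrow G_T\to G$ yields the required extension $f\colon T\to G$ restricting to $f_\eta$.

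I expect the extension step in part 2) to be the only real obstacle: one must argue carefully that the schematic closure of the étale section remains finite and birational over $T$ and then invoke normality (equivalently, Zariski's main theorem) to conclude it is an isomorphism, and one must handle the reduction to connected components of $T$ so that integrality and birationality are meaningful. Part 1) and the injectivity half of part 2) are then formal consequences of the definitions.
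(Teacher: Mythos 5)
Your proof is correct, but it follows a genuinely different route from the paper's, which handles both parts by citation to Bosch--L{\"u}tkebohmert--Raynaud: for part 1) the paper observes, as you do, that $\E[p]$ is an {\'e}tale closed subgroup scheme of $\E$, and then invokes \cite[\S{}7.1, Corollary~6]{NeronModels} (a smooth closed subgroup scheme of a N{\'e}ron model is a N{\'e}ron model of its generic fiber); for part 2) it notes that $G$ is finite, hence proper, and invokes the criterion of \cite[\S{}7.1, Theorem~1]{NeronModels}. Your part 1) amounts to a direct proof of the special case of the cited corollary that is needed, with the same essential input (the N{\'e}ron mapping property of $\E$ together with the realization of $\E[p]$ as the kernel of $[p]$), made explicit via left-exactness of $\operatorname{Hom}_S(T,-)$; your remark that this argument needs no finiteness of $\E[p]\to S$ is exactly why the paper, too, must treat part 1) by a different mechanism than part 2). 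The real divergence is in part 2): the theorem the paper cites is far from elementary (its proof passes through weak N{\'e}ron models and Weil's extension theorem for rational maps into smooth group schemes), whereas your schematic-closure argument---the closure of the generic section is finite, integral, and birational over the normal connected test scheme, hence an isomorphism---is self-contained, using only finiteness of $G\to S$ and regularity of smooth $S$-schemes. It also exposes something the citation obscures: the group structure of $G$ plays no role in part 2), since by your argument any finite {\'e}tale $S$-scheme satisfies the N{\'e}ron mapping property; by contrast, properness alone would not suffice without group structure, since a $k(S)$-morphism from the generic fiber of a smooth $S$-scheme of higher relative dimension to a proper target such as $\mathbb{P}^1_S$ extends in general only to a rational map. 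So the paper's route is shorter and leans on standard machinery, while yours is elementary and trades the valuative criterion for Zariski's main theorem in the form ``a finite birational morphism onto a normal integral scheme is an isomorphism''; your auxiliary verifications (each component of $T$ dominates $S$ by flatness, $T$ is regular hence normal, sections of separated {\'e}tale maps are closed immersions) are all correctly handled.
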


\begin{proof}
1) Let us note that, $p$ being invertible on $S$, the $S$-group scheme $\E[p]$ is {\'e}tale, and is a closed subgroup scheme of $\E$. Therefore, $\E[p]$ is the N{\'e}ron model of $E[p]$, according to \cite[\S{}7.1, Corollary~6]{NeronModels}.

2) By hypothesis, $G$ is a smooth and separated $S$-group scheme. Moreover, the map $G\to S$ is finite, hence proper. It follows from \cite[\S{}7.1, Theorem~1]{NeronModels} that $G$ is the N{\'e}ron model of its generic fiber.
\end{proof}

Let $\Res_{k(C)/k(S)}\mu_p$ denote the Weil restriction of $\mu_p$, which, by definition, satisfies
$$
(\Res_{k(C)/k(S)}\mu_p)(L)=\mu_p(L\otimes k(C))
$$
for any $k(S)$-algebra $L$. Let $T$ denote the generic point of $C$, which is also the generic non-zero $p$-torsion point of $E$. Then the Weil pairing $e_p$ induces a morphism of finite {\'e}tale $k(S)$-group schemes (equivalently of Galois modules over $k(S)$)
\begin{align*}
w:E[p] &\longrightarrow \Res_{k(C)/k(S)}\mu_p \\
P &\longmapsto e_p(P,T).
\end{align*}

According to Lemma~\ref{unramified}, $\E[p]$ and $\mu_p$ are N{\'e}ron models of their generic fibers. It follows from \cite[\S{}7.6, Proposition~6]{NeronModels} that the same holds for $\Res_{C/S}\mu_p$.
We denote by
$$
\underline{w}:\E[p] \longrightarrow \Res_{C/S}\mu_p
$$
the map induced by $w$ on these N{\'e}ron models.

\begin{lem}
\label{w_injective}
The map $\underline{w}$ induces on {\'e}tale cohomology an injective map
$$
h^1\underline{w}:H^1(S,\E[p]) \longrightarrow H^1(S,\Res_{C/S}\mu_p)=H^1(C,\mu_p),
$$
whose image is contained in the kernel of the norm map
$$
N_{C/S}:H^1(C,\mu_p)\longrightarrow H^1(S,\mu_p).
$$
When $p\geq 3$, the image of $h^1\underline{w}$ is contained in the kernel of the norm map
$$
N_{C/C^+}:H^1(C,\mu_p)\longrightarrow H^1(C^+,\mu_p).
$$
\end{lem}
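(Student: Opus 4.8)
The plan is to treat the two assertions separately, handling the norm condition first because it is the cleaner of the two. I would check that the \emph{sheaf} morphism $N\circ\underline{w}$ is already zero, where $N:\Res_{C/S}\mu_p\to\mu_p$ is the norm (product-of-components) map. Indeed, on points $N(\underline{w}(P))=\prod_{Q\in E[p]\setminus\{0\}}e_p(P,Q)=e_p\bigl(P,\sum_{Q\ne 0}Q\bigr)$ by bilinearity of the Weil pairing, and since $E[p]\cong(\Z/p)^2$ one has $\sum_{Q\in E[p]}Q=0$ for every $p$ (including $p=2$), hence $\sum_{Q\ne 0}Q=0$ and $N\circ\underline{w}=1$. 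Because the map $N_{C/S}$ on cohomology is exactly the one induced by the sheaf norm under the Shapiro identification $H^1(S,\Res_{C/S}\mu_p)=H^1(C,\mu_p)$, functoriality gives $N_{C/S}\circ(h^1\underline{w})=h^1(N\circ\underline{w})=0$. Equivalently, $\underline{w}$ factors through $\ker N\subset\Res_{C/S}\mu_p$. This settles the inclusion of the image in $\ker N_{C/S}$.

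For injectivity I would first set up a convenient factorization. Write $\pi\colon C\to S$ for the structure map. By the adjunction between $\pi^{*}$ and $\Res_{C/S}=\pi_{*}$, the morphism $\underline{w}$ corresponds to $\underline{w}'\colon \pi^{*}\E[p]\to\mu_p$, $P\mapsto e_p(P,T)$, where $T$ is the tautological $p$-torsion section over $C$; under Shapiro's isomorphism $h^1\underline{w}$ then factors as $\underline{w}'_{*}\circ\pi^{*}$, with $\pi^{*}\colon H^1(S,\E[p])\to H^1(C,\pi^{*}\E[p])$ the pullback. Since $\deg\pi=p^{2}-1$ is prime to $p$ and all groups are $p$-torsion, the relation $\pi_{*}\circ\pi^{*}=\deg\pi$ shows $\pi^{*}$ is injective. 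Thus it suffices to prove that $\underline{w}'_{*}$ is injective on the image of $\pi^{*}$.

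Here lies the crux, and I expect it to be the main obstacle. Because $\langle T\rangle=T^{\perp}$ for the symplectic Weil form on the plane $E[p]$, the section $T$ yields an exact sequence $0\to\Z/p\xrightarrow{T}\pi^{*}\E[p]\xrightarrow{\underline{w}'}\mu_p\to 0$ of \'etale sheaves on $C$, so $\ker\underline{w}'_{*}=\mathrm{im}\bigl(H^1(C,\Z/p)\xrightarrow{T_{*}}H^1(C,\pi^{*}\E[p])\bigr)$. The difficulty is that this kernel is genuinely nonzero and already consists of everywhere-unramified classes, so ramification considerations alone are useless; the real input must be that the surviving class is \emph{pulled back from} $S$. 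I would argue as follows: if $\pi^{*}\xi=T_{*}\alpha$, then applying $\pi_{*}=N_{C/S}$ gives $(p^{2}-1)\,\xi=\tau_{*}\alpha$, where $\tau=\pi_{*}\circ T$ is the sum-over-the-fibre map $[Q]\mapsto Q$; this $\tau$ is precisely the transpose of $\underline{w}$ under the self-duality $\E[p]\cong\E[p]^{\vee}$ furnished by the Weil pairing. The plan is to exploit that $\tau$ kills the diagonal (augmentation) subsheaf — this is the self-dual incarnation of the identity $\sum_{Q\ne 0}Q=0$ used above — together with the geometric irreducibility of $E[p]$, which prevents $\E[p]$ from having any sub- or quotient-line over $k(S)$, in order to force $\tau_{*}\alpha$, and hence $\xi$, to vanish. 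Throughout I would use the injection \eqref{injNeron} to pass to generic fibres, where the displayed exact sequence is unambiguous.

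As a consistency check and possible alternative route, I note that by Poincar\'e duality on the curves $S$ and $C$ (with $\E[p]$ self-dual, $\mu_p^{\vee}=\Z/p$, and $H^{2}(-,\mu_p)=\Z/p$), injectivity of $h^1\underline{w}$ is dual to \emph{surjectivity} of $h^1\tau\colon H^1(C,\Z/p)\to H^1(S,\E[p])$, since $\tau=\underline{w}^{\vee}$. Via the long exact cohomology sequence this reduces either statement to the vanishing of $H^0\bigl(S,\mathrm{coker}\,\underline{w}\bigr)$ modulo the image of $H^0(S,\Res_{C/S}\mu_p)=\mu_p(k)$, a vanishing I would again extract from the irreducibility hypothesis by computing the Galois invariants of the cokernel. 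Whichever formulation proves more tractable, the essential and most delicate point remains ruling out a nonzero class that is simultaneously defined over $S$ and carried by the tautological sub-$\Z/p$ over $C$.
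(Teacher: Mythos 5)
Your treatment of the second assertion (image contained in $\ker N_{C/S}$) is correct and is essentially the paper's argument: the paper also checks that the composition $E[p]\xrightarrow{w}\Res_{k(C)/k(S)}\mu_p\xrightarrow{N}\mu_p$ vanishes at the level of the generic fiber and then transports this to Néron models and to cohomology; you simply make explicit the computation ($N(w(P))=e_p\bigl(P,\sum_{Q\neq 0}Q\bigr)=1$) that the paper leaves to the reader. No issue there.

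The injectivity half, however, has a genuine gap, and it is exactly at the point you yourself flag as ``the crux.'' Your setup (Shapiro factorization $h^1\underline{w}=\underline{w}'_*\circ\pi^*$, injectivity of $\pi^*$ because $\deg\pi=p^2-1$ is prime to $p$, identification of $\ker\underline{w}'_*$ with the image of $T_*:H^1(C,\Z/p)\to H^1(C,\pi^*\E[p])$) is sound, but the remaining step --- showing that no nonzero class of the form $\pi^*\xi$ lies in the image of $T_*$ --- is only described as a ``plan,'' and the one concrete deduction you offer is circular. Indeed, if $\pi^*\xi=T_*\alpha$, then applying $\pi_*$ gives $\tau_*\alpha=(p^2-1)\xi=-\xi$ in the $p$-torsion group $H^1(S,\E[p])$; hence ``forcing $\tau_*\alpha$ to vanish'' is literally a restatement of ``forcing $\xi$ to vanish,'' not a method for proving it. Worse, no argument based on properties of $\tau$ alone (such as $\tau$ killing the augmentation subsheaf) can succeed: by the duality you invoke at the end, if the lemma is true then $\tau_*$ is \emph{surjective}, so it is very far from vanishing on $H^1(C,\Z/p)$; one must use the hypothesis that $T_*\alpha$ is pulled back from $S$, together with the irreducibility of $E[p]$, in an essential and quantitative way. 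That is precisely the content of the result of Djabri--Schaefer--Smart \cite[Prop.~7 and Prop.~8]{dss} which the paper quotes for the injectivity of $h^1w$ on Galois cohomology of the generic fibers (their argument is purely Galois-theoretic, via a computation of invariants of the cokernel of $w$, and transfers verbatim from the number field setting); the paper then deduces injectivity of $h^1\underline{w}$ from that of $h^1w$ using the commutative square whose vertical restriction maps are injective by \eqref{injNeron} and Lemma~\ref{unramified}. Your alternative route via Poincaré duality and $H^0$ of $\mathrm{coker}\,\underline{w}$ is indeed close in spirit to how such a computation goes, but you do not carry it out, so the main content of the lemma remains unproven in your proposal.
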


\begin{proof}
It has been proved by Djabri, Schaefer and Smart \cite[Prop.~7 and Prop.~8]{dss} that the map $w$ induces on Galois cohomology an injective map
$$
h^1w:H^1(k(S),E[p]) \longrightarrow H^1(k(S),\Res_{k(C)/k(S)}\mu_p)=H^1(k(C),\mu_p).
$$

Let us stress here the fact that the authors cited above work over a number field or the completion of a number field. Nevertheless, their arguments are purely Galois-theoretical and extend without any change to our function field setting.

Now, consider the following commutative diagram
$$
\begin{CD}
H^1(S,\E[p]) @>h^1\underline{w}>> H^1(C,\mu_p) \\
@VVV @VVV \\
H^1(k(S),E[p]) @>h^1w>> H^1(k(C),\mu_p)\\
\end{CD}
$$
in which the vertical maps are injective, according to \eqref{injNeron}. Thus, one deduces the injectivity of $h^1\underline{w}$ from that of $h^1w$. Then, we note that the composition of the maps
$$
\begin{CD}
E[p] @>w>> \Res_{k(C)/k(S)}\mu_p @>N_{k(C)/k(S)}>> \mu_p \\
\end{CD}
$$
is zero, because the sum of all non-zero elements of $E[p]$ is zero. It follows that the composition of the induced maps on the N{\'e}ron models is also zero, hence the same holds for the maps induced on cohomology. Finally, when $p\geq 3$, the map $P\mapsto -P$ is a nontrivial involution of $C$, with quotient $C^+$, and the composition of the maps
$$
\begin{CD}
E[p] @>w>> \Res_{k(C)/k(S)}\mu_p @>N_{k(C)/k(C^+)}>> \Res_{k(C^+)/k(S)}\mu_p \\
\end{CD}
$$
is zero, because $e_p(P,T)\cdot e_p(-P,T)=1$. This proves the last part of the statement.
\end{proof}

\begin{proof}[Proof of Theorem~\ref{thm_kummer}]
Let us note that the map $[p]:\E^0_v\to \E^0_v$ is surjective for the {\'e}tale topology on $k_v$ (multiplication by $p$ on a smooth connected group scheme in characteristic $\neq p$ is {\'e}tale surjective). It follows that multiplication by $p$ on $\E$ induces an exact sequence for the {\'e}tale topology on $S$
$$
\begin{CD}
0 @>>> \E[p] @>>> \E @>[p]>> \E^{p\Phi} @>>> 0. \\
\end{CD}
$$

Applying cohomology to this sequence, we obtain an injective map
$$
\delta:\E^{p\Phi}(S)/p\E(S) \longrightarrow H^1(S,\E[p]).
$$

According to Lemma~\ref{w_injective}, the target group can be embedded in the kernel of the norm map
$$
N_{C/S}:H^1(C,\mu_p)\longrightarrow H^1(S,\mu_p).
$$

Let us determine the size of this kernel.
The prime $p$ being invertible in $k$, multiplication by $p$ on $\Gm$ is surjective for the {\'e}tale topology on $k$-schemes. Moreover, $S$ and $C$ being geometrically integral projective curves over $k$, we have $\Gm(S)=\Gm(C)=k^{\times}$. Therefore, Kummer theory over $S$ and $C$ yields a commutative diagram with exact rows
$$
\begin{CD}
0 @>>> k^{\times}/(k^{\times})^p @>>> H^1(C,\mu_p) @>\kappa>> \Pic(C)[p] @>>> 0 \\
@. @| @VVN_{C/S}V @VVN_{C/S}V \\
0 @>>> k^{\times}/(k^{\times})^p @>>> H^1(S,\mu_p) @>>> \Pic(S)[p] @>>> 0 \\
\end{CD}
$$
where the vertical maps are the norm maps.  We deduce, by the snake lemma, that the two norm maps $N_{C/S}$ above have the same kernel. In other words, if we compose the natural map $\kappa:H^1(C,\mu_p)\to\Pic(C)[p]$ with $h^1\underline{w}\circ\delta$, we obtain an injective group morphism
$$
\E^{p\Phi}(S)/p\E(S) \longrightarrow \ker\left(N_{C/S}:\Pic(C)[p]\to\Pic(S)[p]\right)
$$
which proves \eqref{eq:2descent_map} when $p=2$. When $p\geq 3$, the last statement of Lemma~\ref{w_injective} allows us to prove \eqref{eq:pdescent_map} with a similar argument. When $p=2$, $C\to S$ has degree $3$, hence $N_{C/S}$ is surjective on $2$-torsion, from which we deduce that
$$
\dim_{\F_2} \ker\left(N_{C/S}:\Pic(C)[2]\to\Pic(S)[2]\right) = \dim_{\F_2} \Pic(C)[2] - \dim_{\F_2} \Pic(S)[2].
$$
Similarly, when $p\geq 3$ the map $C\to C^+$ has degree $2$ and we have
$$
\dim_{\F_p} \ker\left(N_{C/C^+}:\Pic(C)[p]\to\Pic(C^+)[p]\right) = \dim_{\F_p} \Pic(C)[p] - \dim_{\F_p} \Pic(C^+)[p]
$$
The statements \eqref{inequality2} and \eqref{inequality1} follow by combining this with Lemma~\ref{rank_size}.
\end{proof}

\begin{rmq}
\label{rmq:geometricSelmer}
In \cite[\S{}4.2]{CSSD98}, the authors define a geometric Selmer group, denoted $\mathfrak{S}(\E,p)$, which, according to Prop.~4.2.2 of \emph{loc.~cit.}, fits into an exact sequence
$$
\begin{CD}
0 @>>> H^1(S,\E[p]) @>>> \mathfrak{S}(\E,p) @>>> H^0(S,\Phi/p\Phi). \\
\end{CD}
$$
Our proof of Theorem~\ref{thm_kummer} relies on a) bounding the size of $H^1(S,\E[p])$ in terms of the curves $S$ and $C$, and b) computing the exact size of $H^0(S,\Phi/p\Phi)$. This yields an upper bound on the size of the geometric Selmer group.
\end{rmq}

\begin{rmq}
It is important to require that $C$ and $S$ are both geometrically integral and projective.  The fact that $\Gm(S)=\Gm(C)$ is a crucial argument in the proof, which implies that the units do not contribute to the kernel of the norm.   Recall from Remark~\ref{rmq:nongeneric} that, when the full $p$-torsion is defined over $k(S)$, it may happen that the geometric analogue of the Selmer group is infinite.
\end{rmq}

\begin{rmq}
\label{rmq:Dokchitser}
In the framework of $p$-descent over a number field, it was proved by Dokchitser \cite[Cor.~6.5.2]{Dok2000}, under the same assumption (the Galois action on $E[p]$ is transitive), that the image of $h^1\underline{w}$ is contained in the kernel of the norm $N_{k(C)/K}$ for any proper subfield $K\subset k(C)$. So one can improve the statement of Theorem~\ref{thm_kummer}, 1) as follows: if $p\geq 3$, we have an injective morphism
$$
\E^{p\Phi}(S)/p\E(S) \longrightarrow \bigcap_{C\to C'} \ker\left(N_{C/C'}:\Pic(C)[p]\to\Pic(C')[p]\right)
$$
where $C\to C'$ runs through all proper subcovers of $C\to S$. In fact, for $p=3$ this is not an improvement since every proper subcover of $C\to S$ factors through $C\to C^+$ (see the proof of Theorem~\ref{thm:3arithmeticIgusa}).
\end{rmq}


\subsection{Proof of Theorem~\ref{thm:arithmeticIgusa}}
\label{sub:Igusa}


Let us recall that, if $f:X\to Y$ is a finite flat, tamely ramified map of smooth $k$-curves, then the ramification divisor of $f$ is given by
$\sum_x (e_x-1)\cdot x$,
where $x$ runs through closed points of $X$, and $e_x$ denotes the ramification index of $f$ at $x$. All finite maps we shall consider here are tamely ramified.

By assumption, $\car(k)\neq 2, 3$ and $\E$ does not have a nontrivial $2$-torsion section over $S$.
In order to prove \eqref{eq:2descentkbar}, we may assume that $k$ is algebraically closed, which we do until further notice.

Let $C$ be the smooth compactification of $\E[2]\setminus \{0\}$. We note that $C\to S$ is tamely ramified, because the Galois closure of its generic fiber has Galois group $\Z/3\Z$ or $\mathfrak{S}_3$, whose order is coprime to the characteristic of $k$.

We shall first compute the degree of the ramification divisor of $C\to S$. 
 
\begin{lem}
\label{lem:C2ramif}
Let $R\subset C$ be the ramification divisor of the natural cubic map $C\to S$. Then the degree of $R$ is given by the formula
$$
\deg(R)=\deg(\mathfrak{f}_E)-\sum_{v\in\Sigma} \varepsilon_v,
$$
where
$$
\varepsilon_v = \left\{
\begin{array}{ll}
2 & \text{if reduction type $\mathrm{I}_{2n}^*$ for some $n\geq 0$}\\
1 & \text{if $2\mid c_v$, other reduction type}\\
0 & \text{otherwise.}
\end{array}\right.
$$
\end{lem}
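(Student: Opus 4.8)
The plan is to reduce the computation to a local contribution at each place of bad reduction. Since $\car(k)\neq 2,3$ and $C\to S$ has degree $3$, the cover is tamely ramified; it is unramified over the good reduction locus $S\setminus\Sigma$, where $C$ agrees with $\E[2]\setminus\{0\}$ and the map to $S$ is finite étale. Hence $R$ is supported on $\Sigma$, and for each $v\in\Sigma$ its local contribution is $\deg R_v=\sum_{P\mid v}(e_P-1)=3-r_v$, where $r_v$ is the number of points of $C$ over $v$ and we have used $\sum_{P\mid v}e_P=3$. It therefore suffices to prove $3-r_v=f_v-\varepsilon_v$ for each $v\in\Sigma$, where $f_v$ is the conductor exponent, equal to $1$ for multiplicative and $2$ for additive reduction (there being no wild part as $\car(k)\neq 2,3$).

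To compute $r_v$ I would pass to the local field $K_v$ at $v$; since $k$ is algebraically closed its absolute Galois group is the inertia group $I_v$. The points of $C$ over $v$ are then in bijection with the orbits of $I_v$ acting on the three nonzero points of $E[2]$, the ramification index of a point equalling the length of the corresponding orbit. Tameness forces the image of $I_v$ in $\mathrm{Aut}(E[2])=\GL_2(\F_2)\simeq S_3$ to be cyclic, hence to be trivial, of order $2$, or of order $3$. Moreover the number of nonzero $I_v$-fixed points equals $n_v:=\#\E_v[2]-1$: by the Néron mapping property $E[2](K_v)=\E[2](\mathcal{O}_{S,v})$, which has order $\#\E_v[2]$ because $\E[2]$ is étale, and $E[2](K_v)=E[2]^{I_v}$ as $k$ is algebraically closed. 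A trivial image fixes all three points ($n_v=3$); an element of order $2$ acts as a transposition, fixing one and swapping the other two ($n_v=1$); an element of order $3$ acts as a $3$-cycle ($n_v=0$). Thus $r_v=3,2,1$ respectively, so that $3-r_v$ equals $0,1,2$ according as $n_v$ equals $3,1,0$.

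It remains to evaluate $n_v$ from the reduction type. Multiplication by $2$ is surjective on $\E_v^0$, which equals $\Gm$ or $\mathbf{G}_{\mathrm a}$ over the algebraically closed field $k$ and is therefore $2$-divisible since $\car(k)\neq 2$. Hence the component sequence \eqref{eq:compgroup} induces a short exact sequence $0\to\E_v^0[2]\to\E_v[2]\to\Phi_v[2]\to 0$, whence $\#\E_v[2]=\#\E_v^0[2]\cdot\#\Phi_v[2]$. Here $\#\E_v^0[2]$ is $2$ in the multiplicative case and $1$ in the additive case, while $\#\Phi_v[2]$ is read off from the Kodaira type via the known structure of $\Phi_v$; in particular $\Phi_v\simeq(\Z/2\Z)^2$, so $\#\Phi_v[2]=4$, exactly for the types $\mathrm{I}_{2n}^*$. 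Running through Kodaira's list then shows $n_v=3$ (so $3-r_v=0=f_v-\varepsilon_v$) precisely for $\mathrm{I}_{2n}$ and $\mathrm{I}_{2n}^*$; $n_v=0$ (so $3-r_v=2=f_v-\varepsilon_v$) for $\mathrm{II},\mathrm{IV},\mathrm{IV}^*,\mathrm{II}^*$; and $n_v=1$ (so $3-r_v=1=f_v-\varepsilon_v$) in the remaining cases $\mathrm{I}_{2n+1},\mathrm{III},\mathrm{III}^*,\mathrm{I}_{2n+1}^*$. Summing over $\Sigma$ yields $\deg R=\sum_{v\in\Sigma}(f_v-\varepsilon_v)=\deg(\mathfrak{f}_E)-\sum_{v\in\Sigma}\varepsilon_v$.

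The main obstacle is the clean identification of the fiber of $C$ over $v$ with the set of inertia orbits on $E[2]\setminus\{0\}$, and in particular the control of the non-fixed points. This is exactly where tameness is indispensable: it makes $I_v$ act through a cyclic group, so that the fixed-point count $n_v=\#\E_v[2]-1$---computed independently through the Néron model---already determines the entire orbit partition (note that $n_v$ never takes the value $2$). Once this local dictionary is established, the lemma reduces to the bookkeeping against Kodaira's table performed above.
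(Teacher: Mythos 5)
Your proof is correct, and it reaches the paper's key intermediate quantity $\#\E_v[2]$ by a different route. The paper argues scheme-theoretically: it first proves the global identity $C\setminus R=\E[2]\setminus\{0\}$ (one inclusion from \'etaleness of $\E[2]\setminus\{0\}\to S$, the other from the N\'eron mapping property, which extends $E[2]\setminus\{0\}\hookrightarrow E$ to a map $C\setminus R\to\E[2]\setminus\{0\}$), and then reads off $\deg(R_v)$ from the fact that $C\to S$ is finite flat of degree $3$: the fiber over $v$ consists of $\#\E_v[2]-1$ unramified points plus ramified points of index at least $2$, forcing $\deg(R_v)=0,1,2$ according as $\#\E_v[2]=4,2,1$. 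You instead work locally and Galois-theoretically: points of $C$ over $v$ correspond to inertia orbits on the three nonzero $2$-torsion points with ramification index equal to orbit length, tameness makes the image of $I_v$ in $\GL_2(\F_2)\simeq S_3$ cyclic, and the orbit partition --- hence $r_v$ and $\deg(R_v)=3-r_v$ --- is then pinned down by the fixed-point count $n_v=\#\E_v[2]-1$, obtained from the N\'eron property. From that point on the two proofs coincide: both use the exact sequence $0\to\E_v^0[2]\to\E_v[2]\to\Phi_v[2]\to 0$ and Kodaira's table to evaluate $\#\E_v[2]$, and both verify $\deg(R_v)=f_v-\varepsilon_v$ case by case. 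Your version has the merit of making explicit the tameness that the paper uses only implicitly (in equating the ramification divisor with $\sum_P(e_P-1)P$ and in asserting its values of $\deg(R_v)$), and of explaining structurally why the fixed-point count alone determines the whole orbit partition; the paper's version stays closer to the geometry of the N\'eron model and avoids the decomposition-group formalism.

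One point of precision: the equality $E[2](K_v)=\E[2](\mathcal{O}_{S,v})$ should be stated over the completion $\hat{\mathcal{O}}_{S,v}$ (or the henselization), whose fraction field is $K_v$; one then invokes the standard fact that N\'eron models commute with this base change, after which \'etaleness over a henselian local ring gives $\E[2](\hat{\mathcal{O}}_{S,v})\simeq\E_v[2](k)$. Over the non-henselian local ring $\mathcal{O}_{S,v}$ the restriction map to the special fiber is only injective, so the fixed-point count could come out too small. This is a matter of wording rather than a gap, but it is exactly the place where henselianness enters your argument and should be said.
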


\begin{rmq}
Note that $\sum_{v\in\Sigma} \varepsilon_v = \dim_{\F_2} H^0(S,\Phi[2])$.
\end{rmq}

\begin{proof}
By construction, $\E[2]\setminus \{0\}$ is an open subscheme of $C$. In fact, we claim that
\begin{equation}
\label{eq:ramif}
C\setminus R=\E[2]\setminus \{0\}.
\end{equation}

Let us prove the two inclusions: the right hand side is a subscheme of the left hand side, because $\E[2]\setminus \{0\}\to S$ is {\'e}tale (the assumption that $\car(k)\neq 2$ is important here). The other inclusion follows from the universal property of the N{\'e}ron model: $C\setminus R\to S$ is a smooth separated scheme over $S$, hence the inclusion of the generic fiber $E[2]\setminus \{0\}\hookrightarrow E$ can be extended to a map $C\setminus R\to \E$, which takes values in $\E[2]\setminus \{0\}$.

If $v\in\Sigma$ is a place of bad reduction, we denote by $R_v$ the fiber of $R$ above $v$. The map $C\to S$ being finite flat of degree $3$, it follows from \eqref{eq:ramif} that
\begin{equation}
\label{eq:degRv}
\deg(R_v)=\left\{
\begin{array}{ll}
0 & \text{if $\# \E_v[2]=4$} \\
1 & \text{if $\# \E_v[2]=2$} \\
2 & \text{if $\# \E_v[2]=1$},
\end{array}\right.
\end{equation}
where $\# \E_v[2]$ denotes the number of $k$-points of $\E_v[2]$, or equivalently, the rank of $\E_v[2]$ as a finite $k$-group scheme (remember that $k$ is algebraically closed here).

On the other hand, the map $[2]:\E_v^0 \to\E_v^0$ being surjective for the {\'e}tale topology, we deduce from \eqref{eq:compgroup} a short exact sequence of {\'e}tale sheaves
$$
\begin{CD}
0 @>>> \E_v^0[2] @>>> \E_v[2] @>>> \Phi_v[2] @>>> 0. \\
\end{CD}
$$

It follows from the explicit description of the bad fibers of the N{\'e}ron model of an elliptic curve \cite[Chap.~IV, Table~4.1]{silvermanII} that
$$
\# \E_v[2]=\left\{
\begin{array}{ll}
4 & \text{if reduction type $\mathrm{I}_{2n}^*$ for some $n\geq 0$}\\
4 & \text{if reduction type $\mathrm{I}_{2n}$ for some $n\geq 0$}\\
2 & \text{if reduction type $\mathrm{I}_{2n+1}$ for some $n\geq 0$}\\
2 & \text{if reduction type $\mathrm{III}$, $\mathrm{III}^*$ or $\mathrm{I}_{2n+1}^*$ for some $n\geq 0$} \\
1 & \text{if reduction type $\mathrm{II}$, $\mathrm{II}^*$, $\mathrm{IV}$ or $\mathrm{IV}^*$.}
\end{array}\right.
$$

For example, in the case of reduction $\mathrm{I}_{2n}^*$, the group $\Phi_v$ is isomorphic to $(\Z/2\Z)^2$, hence $\E_v[2]$ has four points. In the case of multiplicative reduction $\mathrm{I}_{2n+1}$, we have $\Phi_v\simeq \Z/(2n+1)$ and $\E_v^0\simeq \Gm$, hence $\E_v[2]=\E_v^0[2]=\mu_2$, which has two points.

Using \eqref{eq:degRv} one checks that, in each case listed above,
$$
\deg(R_v) = f_v - \varepsilon_v,
$$
where $f_v$ is the exponent of the conductor at $v$. The result follows immediately by summing up over all $v\in\Sigma$.
\end{proof}

\begin{proof}[Proof of Theorem~\ref{thm:arithmeticIgusa}]
Applying the Riemann-Hurwitz formula to the cubic map $C\to S$, we find that
$$
2g(C)-2=3(2g(S)-2)+\deg(R),
$$
where $R\subset C$ is the ramification divisor of $C\to S$. Equivalently,
$$
2g(C)-2g(S)=4g(S)-4+\deg(R).
$$
Replacing $\deg(R)$ by its value computed in Lemma~\ref{lem:C2ramif} yields
$$
2g(C)-2g(S) + \sum_{v\in\Sigma} \varepsilon_v = 4g(S)-4 +\deg(\mathfrak{f}_E).
$$
But, by definition of $\varepsilon_v$, we have
$$
\sum_{v\in\Sigma} \varepsilon_v = \#\{v\in \Sigma, 2\mid c_v\} +\#\{v\in \Sigma, \text{the red. type of $\E$ at $v$ is $\mathrm{I}_{2n}^*$ for some $n\geq 0$} \}
$$
which proves \eqref{eq:2descentkbar}.

Let us now prove the last statement of Theorem~\ref{thm:arithmeticIgusa}. Keeping for the moment the assumption that $k$ is algebraically closed, we have
$$
\dim_{\F_2}\Pic(C)[2]=2g(C)
$$
and similarly for $S$. Therefore, the relation \eqref{eq:2descentkbar} means that \eqref{inequality2} is equivalent to the geometric rank bound. Dropping now the assumption that $k$ is algebraically closed, it follows that the bound \eqref{inequality2} is a refinement of the geometric rank bound.
\end{proof}


\subsection{Proof of Theorem~\ref{thm:0bound3}}
\label{sub:0bound}


We keep the notation of the previous section.

\begin{proof}[Proof of Theorem~\ref{thm:0bound3}]

We first prove the bound \eqref{imp1}.  Let $\overline{\Sigma}$ be the set of places of bad reduction of $\E_{\Qbar}$ over $S_{\Qbar}$.  From the proof of Theorem \ref{thm:arithmeticIgusa}, we have 
\begin{align*}
g(C)=3g(S)-2+\frac{1}{2}\left(\deg(\mathfrak{f}_E)-\sum_{v\in\overline{\Sigma}} \varepsilon_v\right).
\end{align*}
Since $C$ has good reduction at $3$, the reduction map modulo $3$ is injective on $2$-torsion, and Theorem~\ref{thm:Bhargava} implies that
\begin{align*}
\dim_{\F_2} \Pic(C)[2] &\leq \log_2\left(\frac{3^{g(C)+1}-1}{2}\right)< (g(C)+1)\log_23-1\\
&<(\log_23)\left(3g(S)-1+\frac{1}{2}\left(\deg(\mathfrak{f}_E)-\sum_{v\in\overline{\Sigma}}\varepsilon_v\right)\right)-1.
\end{align*}
For $v\in \overline{\Sigma}$, let $m_v$ denote the number of irreducible components in the fiber above $v$.  Then it follows easily from the definition of $\varepsilon_v$ and a known formula for $12\chi$ \cite{shioda92} that
\begin{align*}
\deg(\mathfrak{f}_E)+\sum_{v\in\overline{\Sigma}}\varepsilon_v\leq \deg(\mathfrak{f}_E)+\sum_{v\in\overline{\Sigma}}(m_v-1)=12\chi,
\end{align*}
and this inequality is sharp only if $\varepsilon_v=m_v-1$ for each bad $v$.

Let us now consider the inequality \eqref{inequality2}, in which we neglect the negative term $-\dim_{\F_2} \Pic(S)[2]$, and observe that $\sum_{v\in\Sigma}\varepsilon_v\leq \sum_{v\in\overline{\Sigma}}\varepsilon_v$, which yields:
\begin{align*}
\rk_{\Z} E(\Q(S))&\leq \dim_{\F_2} \Pic(C)[2]+\sum_{v\in\overline{\Sigma}}\varepsilon_v\\
&\leq(\log_23)\left(3g(S)-1+\frac{1}{2}\left(\deg(\mathfrak{f}_E)-\sum_{v\in\overline{\Sigma}}\varepsilon_v\right)\right)-1+\sum_{v\in\overline{\Sigma}}\varepsilon_v\\
&= 3(\log_23)g(S)+\frac{\log_23}{2}\deg(\mathfrak{f}_E)+\left(1-\frac{\log_23}{2}\right)\sum_{v\in\overline{\Sigma}}\varepsilon_v -\log_23-1\\
&= 3(\log_23)g(S)+\frac{\log_23}{2}\left(\deg(\mathfrak{f}_E)+\sum_{v\in\overline{\Sigma}}\varepsilon_v\right)-(\log_23-1)\sum_{v\in\overline{\Sigma}}\varepsilon_v-\log_23-1\\
&\leq 3(\log_23)g(S)+6 (\log_23)\chi -(\log_23-1)\sum_{v\in\overline{\Sigma}}\varepsilon_v -\log_23-1.
\end{align*}

For the bound \eqref{imp2}, suppose that $C$ has good reduction at $p\in \{3,5\}$.  Since $S=\mathbb{P}^1$, the curve $C$ is trigonal.  It is well-known that the gonality of $C$ can only decrease after reduction modulo a good prime, and so applying Theorem~\ref{thm:Bhargava} with $n=3$ yields
\begin{align*}
\dim_{\F_2} \Pic(C)[2] &\leq \frac{2}{3}g(C)\log_2p+O(1).
\end{align*}
Now nearly the exact same calculation as above gives
\begin{align*}
\rk_{\Z} E(\Q(t))\leq 4(\log_2 p)\chi+O(1).
\end{align*}
\end{proof}


\subsection{Proof of Theorem~\ref{thm:3arithmeticIgusa}}
\label{sub:3Igusa}


By assumption, $\car(k)\neq 2, 3$ and the prime $3$ satisfies assumptions (i)--(iii).
In order to prove \eqref{eq:3descentkbar}, we may assume that $k$ is algebraically closed, which we do until further notice.

Let $C$ be the smooth compactification of $\E[3]\setminus \{0\}$, and let $C^+$ be the quotient of $C$ by the involution $P\mapsto -P$.

The composite morphism $\Gal(k(S)(E[3])/k(S)) \to \GL_2(\F_3) \stackrel{\mathrm{det}}{\longrightarrow} \F_3^*$ is the cyclotomic character, which is trivial since $k$ is algebraically closed. Therefore, $\Gal(k(S)(E[3])/k(S))$ is a subgroup of $\SL_2(\F_3)$ which, by assumption (i)--(iii), acts transitively on $\F_3^2\setminus \{0\}$. It follows that $\Gal(k(S)(E[3])/k(S))$ is equal to $\SL_2(\F_3)$ or to its $2$-Sylow subgroup the quaternionic group $Q_8$, which is normal in $\SL_2(\F_3)$, with quotient group cyclic of order three.

In fact, it is easy to check \cite[Prop.~5.4.3]{Adelmann} that the subfield of $k(S)(E[3])$ fixed by $Q_8$ is $k(S)(\sqrt[3]{\Delta})$, where $\Delta \in k(S)$ is the discriminant of $E$. Therefore, $\Gal(k(S)(E[3])/k(S))=Q_8$ if and only if $\Delta$ is a cube in $k(S)$. If not, then $k(S)(E[3])$ is the compositum of $k(C)$ and $k(S)(\sqrt[3]{\Delta})$ over $k(S)$. Switching to geometry, if we denote by $S[\sqrt[3]{\Delta}]$ (resp. $C[\sqrt[3]{\Delta}]$) the curve with function field $k(S)(\sqrt[3]{\Delta})$ (resp. $k(S)(E[3])$), we have the following cartesian square of connected covers of curves, in which Galois covers are labelled with their corresponding Galois group.


\newcommand{\mypicture}{
\begin{tikzpicture}
  \node (Q1) at (0,0) {$S$};
  \node (Q2) at (2,2) {$C$};
  \node (Q3) at (0,4) {$C[\sqrt[3]{\Delta}]$};
  \node (Q4) at (-2,2) {$S[\sqrt[3]{\Delta}]$};

  \draw (Q1)--(Q2) node [pos=0.7, below,inner sep=0.3cm] {8};
  \draw (Q1)--(Q4) node [pos=0.9, below,inner sep=0.5cm] {$\Z/3\Z$};
  \draw (Q3)--(Q4) node [pos=0.8, above,inner sep=0.5cm] {$Q_8$};
  \draw (Q2)--(Q3) node [pos=0.2, above,inner sep=0.5cm] {$\Z/3\Z$};
  \draw (Q1)--(Q3) node [pos=0.5, right,inner sep=0.1cm] {$\SL_2(\F_3)$};
\end{tikzpicture}
} 

\begin{wrapfigure}{r}{0.3\textwidth}  \mypicture{}  \end{wrapfigure}

Let us underline the fact that, in any case, $C[\sqrt[3]{\Delta}]\to S$ is tamely ramified, since it is Galois of degree $24$ (resp. $8$ in the quaternionic case), which is coprime to the characteristic of $k$. Therefore, the inertia groups of its ramified points are cyclic subgroups of $\SL_2(\F_3)$, hence have order $2$, $3$, $4$ or $6$ (resp. $2$ or $4$ in the quaternionic case).

Let $R\subset C$ be the ramification divisor of $C\to S$. If $v\in S$ is a closed point, we denote by $C_v$ (resp. by $R_v$) the fiber of $C$ (resp. $R$) at $v$. As previously \eqref{eq:ramif}, we note that
$$
C\setminus R=\E[3]\setminus \{0\}.
$$

Similarly, we denote by $R^+\subset C^+$ the ramification divisor of $C^+\to S$. We note that $Q_8$ has a unique subgroup of order two, which means that $P\mapsto -P$ is the unique automorphism of order $2$ of $C[\sqrt[3]{\Delta}]\to S$. The same holds for $C\to S$ (which is not Galois in general, but over which the automorphism $P\mapsto -P$ is still defined). It then follows from the diagram above that, given a point $P\in C$ with ramification index $e_P$ (relative to $S$), its image by the map $C\to C^+$ has ramification index $e_P/2$ if $e_P$ is even, and $e_P$ if $e_P$ is odd.

We shall now proceed to a case-by-case description of $R_v$ and $R_v^+$ depending on the reduction type of $\E$ at $v$.
\begin{enumerate}
\item Reduction type $\mathrm{I}_n$, $3\mid n$. In this case, $\#\E_v[3]=9$ hence $R_v$ and $R_v^+$ are both zero.
\item Reduction type $\mathrm{I}_n$, $3\nmid n$. In this case, $\#\E_v[3]=3$ hence $C_v$ has exactly two unramified points $P_1$ and $P_2$. Moreover, $v(\Delta)=n$, hence $\Delta$ is not a cube in $k(S)$ and, after performing the totally ramified base change $S[\sqrt[3]{\Delta}] \to S$, $E$ has reduction type $I_{3n}$, in particular $C[\sqrt[3]{\Delta}]\to S[\sqrt[3]{\Delta}]$ is unramified above $v$ (previous case). It follows that the ramified points of $C\to S$ have ramification index $3$, hence $C_v=P_1+P_2+3Q_1+3Q_2$ and $C^+_v=P^++3Q^+$. Therefore, $\deg(R_v)=4$ and $\deg(R_v^+)=2$.
\item Reduction type $\mathrm{I}_n^*$, $3\mid n$. In this case, $\#\E_v[3]=1$ hence $C_v$ has all its points ramified. Moreover, $v(\Delta)=6+n$, hence $S[\sqrt[3]{\Delta}] \to S$ is unramified above $v$. So without loss of generality we may assume that $\Delta$ is a cube in $k(S)$, i.e. $C\to S$ is Galois with group $Q_8$. On the other hand it is known that, after a quadratic ramified base change, $E$ has reduction type $I_{2n}$, in which case $C\to S$ is unramified (first case). We deduce that all ramification indexes of $C\to S$ are equal to $2$, i.e. $C_v= 2P_1+2P_2+2P_3+2P_4$, and that $C^+_v=P_1^++P_2^++P_3^++P_4^+$. Therefore, $\deg(R_v)=4$ and $\deg(R_v^+)=0$.
\item Reduction type $\mathrm{I}_n^*$, $3\nmid n$. As in the previous case, $C_v$ has all its points ramified. After performing a quadratic ramified base change, $E$ has reduction type $I_{2n}$, and we recover the case 2. We conclude that $C_v=2P+6Q$ and $C^+_v=P^++3Q^+$. Therefore, $\deg(R_v)=6$ and $\deg(R_v^+)=2$.
\item Reduction type $\mathrm{IV}$ or $\mathrm{IV}^*$. In this case, $\#\E_v[3]=3$ hence $C_v$ has exactly two unramified points $P_1$ and $P_2$. Moreover, $v(\Delta)$ is $4$ or $8$, hence $\Delta$ is not a cube in $k(S)$ and, after performing the totally ramified base change $S[\sqrt[3]{\Delta}] \to S$, $E$ acquires good reduction, because the valuation of $\Delta$ is multiplied by $3$, hence is zero modulo $12$. Therefore, $C[\sqrt[3]{\Delta}]\to S[\sqrt[3]{\Delta}]$ is unramified above $v$. We conclude that the ramification is the same as in 2.
\item[] In the remaining cases, the reduction is additive, potentially good, and $C_v$ has all its points ramified. According to Serre-Tate \cite[Cor.~2]{SerreTate}, the curve $E$ acquires good reduction exactly when the field of definition of the $3$-torsion points becomes unramified. Therefore, the ramification indexes of  $C[\sqrt[3]{\Delta}]\to S$ are all equal to the \textit{semistability defect} of $E$, which is the denominator of $v(\Delta)/12$ since $\car(k)\neq 2, 3$ \cite[Prop.~1]{Kraus1990}.
\item Reduction type $\mathrm{II}$ or $\mathrm{II}^*$. In this case, $C_v = 2P+6Q$, $\deg(R_v)=6$ and $\deg(R_v^+)=2$.
\item Reduction type $\mathrm{III}$ or $\mathrm{III}^*$. In this case, $C_v = 4P+4Q$, $\deg(R_v)=6$ and $\deg(R_v^+)=2$.
\end{enumerate}

One checks that, in each case listed above,
$$
\frac{1}{2}\left(\deg(R_v) - \deg(R_v^+)\right) = f_v -
\left\{
\begin{array}{ll}
1 & \text{if $3\mid c_v$}\\
0 & \text{otherwise.}
\end{array}\right.
$$
Summing up over all $v\in\Sigma$, it follows that
\begin{equation}
\label{eq:degR-degR+}
\frac{1}{2}\left(\deg(R)-\deg(R^+)\right) = \deg(\mathfrak{f}_E) - \#\{v\in \Sigma, 3\mid c_v\}.
\end{equation}

The proof ends by applying the Riemann-Hurwitz formula to the covers $C\to S$ of degree $8$, and $C^+\to S$ of degree $4$. Subtracting the two identities and dividing by $2$, one obtains
$$
g(C)-g(C^+) = 4g(S) - 4 +\frac{1}{2}\left(\deg(R)-\deg(R^+)\right),
$$
and \eqref{eq:3descentkbar} follows by combining this with \eqref{eq:degR-degR+}.

Let us now prove the last statement of Theorem~\ref{thm:3arithmeticIgusa}. Keeping for the moment the assumption that $k$ is algebraically closed, we have $\dim_{\F_3}\Pic(C)[3]=2g(C)$, and similarly for $C^+$. By self-duality of the Jacobian of $C$, the Weil pairing induces a perfect, symplectic pairing $\Pic(C)[3]\times \Pic(C)[3]\to \mu_3$, which is Galois equivariant. Dropping now the assumption that $k$ is algebraically closed, it follows that in particular, if $k$ does not contain cube roots of unity, then
$$
\dim_{\F_3} \Pic(C)[3] \leq g(C)
$$
and similarly for $C^+$. Therefore, the identity \eqref{eq:3descentkbar} implies that, for $p=3$, the bound \eqref{inequality1} is a refinement of the geometric rank bound provided $k$ does not contain cube roots of unity.



\bibliographystyle{amsalpha}
\bibliography{biblio}

\newcommand{\etalchar}[1]{$^{#1}$}
\providecommand{\bysame}{\leavevmode\hbox to3em{\hrulefill}\thinspace}
\providecommand{\MR}{\relax\ifhmode\unskip\space\fi MR }
\providecommand{\MRhref}[2]{%
  \href{http://www.ams.org/mathscinet-getitem?mr=#1}{#2}
}
\providecommand{\href}[2]{#2}
\begin{thebibliography}{CTSSD98}

\bibitem[Ade01]{Adelmann}
Clemens Adelmann, \emph{The decomposition of primes in torsion point fields},
  Lecture Notes in Mathematics, vol. 1761, Springer-Verlag, Berlin, 2001.
  \MR{1836119}

\bibitem[AM77]{AM77}
Michael Artin and Barry Mazur, \emph{Formal groups arising from algebraic
  varieties}, Ann. Sci. \'Ecole Norm. Sup. (4) \textbf{10} (1977), no.~1,
  87--131. \MR{0457458}

\bibitem[BK77]{BK77}
Armand Brumer and Kenneth Kramer, \emph{The rank of elliptic curves}, Duke
  Math. J. \textbf{44} (1977), no.~4, 715--743. \MR{457453}

\bibitem[BLR90]{NeronModels}
Siegfried Bosch, Werner L\"utkebohmert, and Michel Raynaud, \emph{N\'eron
  models}, Ergebnisse der Mathematik und ihrer Grenzgebiete (3) [Results in
  Mathematics and Related Areas (3)], vol.~21, Springer-Verlag, Berlin, 1990.
  \MR{1045822}

\bibitem[Bru92]{brumer92}
Armand Brumer, \emph{The average rank of elliptic curves. {I}}, Invent. Math.
  \textbf{109} (1992), no.~3, 445--472. \MR{1176198}

\bibitem[BS96]{BS}
Armand Brumer and Joseph~H. Silverman, \emph{The number of elliptic curves over
  {$\mathbf{Q}$} with conductor {$N$}}, Manuscripta Math. \textbf{91} (1996),
  no.~1, 95--102. \MR{1404420}

\bibitem[BST{\etalchar{+}}17]{bhargava17}
Manjul Bhargava, Arul Shankar, Taniguchi Takashi, Frank Thorne, Jacob
  Tsimerman, and Yongqiang Zhao, \emph{Bounds on 2-torsion in class groups of
  number fields and integral points on elliptic curves},
  \texttt{arXiv:1701.02458 [math.NT]}, 2017.

\bibitem[Cou09]{couveignes09}
Jean-Marc Couveignes, \emph{Linearizing torsion classes in the {P}icard group
  of algebraic curves over finite fields}, J. Algebra \textbf{321} (2009),
  no.~8, 2085--2118. \MR{2501511}

\bibitem[Cox82]{cox82}
David~A. Cox, \emph{Mordell-{W}eil groups of elliptic curves over
  {$\mathbf{C}(t)$}\ with {$p_{g}=0$}\ or {$1$}}, Duke Math. J. \textbf{49}
  (1982), no.~3, 677--689. \MR{672502}

\bibitem[CTSSD98]{CSSD98}
Jean-Louis Colliot-Th\'el\`ene, Alexei~N. Skorobogatov, and Peter
  Swinnerton-Dyer, \emph{Hasse principle for pencils of curves of genus one
  whose {J}acobians have rational {$2$}-division points}, Invent. Math.
  \textbf{134} (1998), no.~3, 579--650. \MR{1660925}

\bibitem[Dok00]{Dok2000}
Tim Dokchitser, \emph{Deformations of $p$-divisible groups and $p$-descent on
  elliptic curves}, Ph.D. thesis, Universiteit Utrecht, 2000.

\bibitem[DSS00]{dss}
Zafer Djabri, Edward~F. Schaefer, and Nigel~P. Smart, \emph{Computing the
  {$p$}-{S}elmer group of an elliptic curve}, Trans. Amer. Math. Soc.
  \textbf{352} (2000), no.~12, 5583--5597. \MR{1694286}

\bibitem[Ell06]{ellenberg06}
Jordan~S. Ellenberg, \emph{Selmer groups and {M}ordell-{W}eil groups of
  elliptic curves over towers of function fields}, Compos. Math. \textbf{142}
  (2006), no.~5, 1215--1230. \MR{2264662}

\bibitem[GL19]{gl19}
Jean Gillibert and Aaron Levin, \emph{Elliptic surfaces over $\mathbb{P}^1$ and
  large class groups of number fields}, Int. J. Number Theory (to appear)
  (2019).

\bibitem[Igu59]{Igusa59}
Jun-ichi Igusa, \emph{Fibre systems of {J}acobian varieties. {III}. {F}ibre
  systems of elliptic curves}, Amer. J. Math. \textbf{81} (1959), 453--476.
  \MR{104669}

\bibitem[Jed16]{tomasz16}
Tomasz Jedrzejak, \emph{A note on the torsion of the {J}acobians of
  superelliptic curves {$y^q=x^p+a$}}, Algebra, logic and number theory, Banach
  Center Publ., vol. 108, Polish Acad. Sci. Inst. Math., Warsaw, 2016,
  pp.~143--149. \MR{3559260}

\bibitem[Klo07]{kloosterman07}
Remke Kloosterman, \emph{Elliptic {$K3$} surfaces with geometric
  {M}ordell-{W}eil rank 15}, Canad. Math. Bull. \textbf{50} (2007), no.~2,
  215--226. \MR{2317444}

\bibitem[Kra90]{Kraus1990}
Alain Kraus, \emph{Sur le d\'{e}faut de semi-stabilit\'{e} des courbes
  elliptiques \`a r\'{e}duction additive}, Manuscripta Math. \textbf{69}
  (1990), no.~4, 353--385. \MR{1080288}

\bibitem[Liu02]{liu}
Qing Liu, \emph{Algebraic geometry and arithmetic curves}, Oxford Graduate
  Texts in Mathematics, vol.~6, Oxford University Press, Oxford, 2002,
  Translated from the French by Reinie Ern{\'e}, Oxford Science Publications.

\bibitem[OS91]{OS}
Keiji Oguiso and Tetsuji Shioda, \emph{The {M}ordell-{W}eil lattice of a
  rational elliptic surface}, Comment. Math. Univ. St. Paul. \textbf{40}
  (1991), no.~1, 83--99. \MR{1104782}

\bibitem[Ray95]{raynaud95}
Michel Raynaud, \emph{Caract\'eristique d'{E}uler-{P}oincar\'e d'un faisceau et
  cohomologie des vari\'et\'es ab\'eliennes}, S\'eminaire {B}ourbaki, {V}ol.\
  9, Soc. Math. France, Paris, 1995, pp.~Exp.\ No.\ 286, 129--147. \MR{1608794}

\bibitem[Shi86]{shioda86}
Tetsuji Shioda, \emph{An explicit algorithm for computing the {P}icard number
  of certain algebraic surfaces}, Amer. J. Math. \textbf{108} (1986), no.~2,
  415--432. \MR{833362}

\bibitem[Shi92]{shioda92}
\bysame, \emph{Some remarks on elliptic curves over function fields},
  Ast\'erisque (1992), no.~209, 12, 99--114, Journ\'ees Arithm\'etiques, 1991
  (Geneva). \MR{1211006}

\bibitem[Sil94]{silvermanII}
Joseph~H. Silverman, \emph{Advanced topics in the arithmetic of elliptic
  curves}, Graduate Texts in Mathematics, vol. 151, Springer-Verlag, New York,
  1994. \MR{1312368}

\bibitem[Sil00]{silverman00}
\bysame, \emph{A bound for the {M}ordell-{W}eil rank of an elliptic surface
  after a cyclic base extension}, J. Algebraic Geom. \textbf{9} (2000), no.~2,
  301--308. \MR{1735774}

\bibitem[Sil04]{silverman04}
\bysame, \emph{The rank of elliptic surfaces in unramified abelian towers}, J.
  Reine Angew. Math. \textbf{577} (2004), 153--169. \MR{2108217}

\bibitem[SS10]{SchSch10}
Matthias Sch\"utt and Tetsuji Shioda, \emph{Elliptic surfaces}, Algebraic
  geometry in {E}ast {A}sia---{S}eoul 2008, Adv. Stud. Pure Math., vol.~60,
  Math. Soc. Japan, Tokyo, 2010, pp.~51--160. \MR{2732092}

\bibitem[ST68]{SerreTate}
Jean-Pierre Serre and John Tate, \emph{Good reduction of abelian varieties},
  Ann. of Math. (2) \textbf{88} (1968), 492--517. \MR{236190}

\bibitem[Tat75]{tate75}
John Tate, \emph{Algorithm for determining the type of a singular fiber in an
  elliptic pencil}, Modular functions of one variable, {IV} ({P}roc.
  {I}nternat. {S}ummer {S}chool, {U}niv. {A}ntwerp, {A}ntwerp, 1972), Springer,
  Berlin, 1975, pp.~33--52. Lecture Notes in Math., Vol. 476. \MR{0393039}

\bibitem[Ulm02]{ulmer02}
Douglas Ulmer, \emph{Elliptic curves with large rank over function fields},
  Ann. of Math. (2) \textbf{155} (2002), no.~1, 295--315. \MR{1888802}

\bibitem[Ulm04]{ulmer04}
\bysame, \emph{Elliptic curves and analogies between number fields and function
  fields}, Heegner points and {R}ankin {$L$}-series, Math. Sci. Res. Inst.
  Publ., vol.~49, Cambridge Univ. Press, Cambridge, 2004, pp.~285--315.
  \MR{2083216}

\bibitem[vL07]{vanluijk07}
Ronald van Luijk, \emph{An elliptic {$K3$} surface associated to {H}eron
  triangles}, J. Number Theory \textbf{123} (2007), no.~1, 92--119.
  \MR{2295433}

\end{thebibliography}



\bigskip

\textsc{Jean Gillibert}, Institut de Math{\'e}matiques de Toulouse, CNRS UMR 5219, 118 route de Narbonne, 31062 Toulouse Cedex 9, France.

\emph{E-mail address:} \texttt{jean.gillibert@math.univ-toulouse.fr}
\medskip

\textsc{Aaron Levin}, Department of Mathematics, Michigan State University, 619 Red Cedar Road, East Lansing, MI 48824, USA.

\emph{E-mail address:} \texttt{adlevin@math.msu.edu}


\end{document}